\newcommand{\R}{\Bbb{R}}
\newcommand{\s}{\Bbb{S}}
\newcommand{\bo}{\mathbb B}
\newtheorem{teor}{Theorem}[section]
\newtheorem{propo}{Proposition}[section]
\newtheorem{cor}{Corollary}[section]
\newcommand{\n}{\noindent}
\begin{document}

\title{Least energy solutions for affine $p$-Laplace equations involving subcritical and critical nonlinearities
\footnote{2020 Mathematics Subject Classification: 35A15; 35J20; 35J66}
\footnote{Key words: Affine variational problems, affine Sobolev inequalities, affine $p$-Laplace operator, affine Brezis-Nirenberg problem}
}

\author{\textbf{Edir Junior Ferreira Leite \footnote{\textit{E-mail addresses}:
edirjrleite@ufv.br (E.J.F. Leite)}}\\ {\small\it Departamento de Matem\'{a}tica,
Universidade Federal de Vi\c{c}osa,}\\ {\small\it CCE, 36570-900, Vi\c{c}osa, MG, Brazil}\\
\textbf{Marcos Montenegro \footnote{\textit{E-mail addresses}:
montene@mat.ufmg.br (M.
Montenegro)}}\\ {\small\it Departamento de Matem\'{a}tica,
Universidade Federal de Minas Gerais,}\\ {\small\it Caixa Postal
702, 30123-970, Belo Horizonte, MG, Brazil}}

\date{}{

\maketitle

\markboth{abstract}{abstract}
\addcontentsline{toc}{chapter}{abstract}

\hrule \vspace{0,2cm}

\n {\bf Abstract}
The paper is concerned with Lane-Emden and Brezis-Nirenberg problems involving the affine $p$-laplace nonlocal operator $\Delta_p^{\cal A}$, which has been introduced in \cite{HJM5} driven by the affine $L^p$ energy ${\cal E}_{p,\Omega}$ from convex geometry due to Lutwak, Yang and Zhang \cite{LYZ2}. We are particularly interested in the existence and nonexistence of positive $C^1$ solutions of least energy type. Part of the main difficulties are caused by the absence of convexity of ${\cal E}_{p,\Omega}$ and by the comparison ${\cal E}_{p,\Omega}(u) \leq \Vert u \Vert_{W^{1,p}_0(\Omega)}$ generally strict.

\vspace{0.5cm}
\hrule\vspace{0.2cm}

\section{Introduction and main results}

In two seminal works, Lutwak, Yang and Zhang established the famous sharp affine $L^p$ Sobolev inequality, namely, in \cite{Z} for $p = 1$ and in \cite{LYZ2} for $1 < p < n$. Precisely, for any $u \in {\cal D}^{1,p}(\R^n)$, it states that
\begin{equation} \label{ASob}
\Vert u \Vert_{L^{p^*}(\R^n)} \leq K_{n,p}\, {\cal E}_p(u),
\end{equation}
where $p^* = \frac{np}{n-p}$ is the Sobolev critical exponent, ${\cal D}^{1,p}(\R^n)$ denotes the usual space of weakly differentiable functions in $\R^n$ endowed with the $L^p$ gradient norm and ${\cal E}_p(u)$ stands for the affine $L^p$ energy expressed for any $p \geq 1$ as
\[
{\cal E}_p(u) = \alpha_{n,p} \left( \int_{\s^{n-1}} \left( \int_{\R^n} | \nabla_\xi u(x) |^p\, dx \right)^{-\frac{n}{p}}\, d\sigma(\xi)\right)^{-\frac 1n}
\]
with $\alpha_{n,p} = \left( 2 \omega_{n+p-2} \right)^{-1/p} \left(n \omega_n \omega_{p-1}\right)^{1/p} \left(n \omega_n\right)^{1/n}$. Here, $\nabla_\xi u(x)$ represents the directional derivative $\nabla u(x) \cdot \xi$ with respect to the direction $\xi \in \s^{n-1}$ and $\omega_k$ is the volume of the unit Euclidean ball in $\R^k$.

The value of the optimal constant in \eqref{ASob} is well known and given for $p = 1$ by
\[
K_{n,1} = \pi^{-\frac 12} n^{-1} \Gamma \left(\frac{n}{2}+1\right)^{\frac{1}{n}}
\]
and for $1 < p < n$ by
\[
K_{n,p} = \pi^{-\frac 12} n^{-\frac 1p} {\left(\frac{p-1}{n-p}\right)^{1-\frac 1p} \left(\frac{\Gamma \left(\frac{n}{2}+1\right) \Gamma (n)}{\Gamma \left(-\frac{n}{p}+n+1\right) \Gamma \left(\frac{n}{p}\right)}\right)^{\frac{1}{n}}}.
\]
Moreover, the corresponding extremal functions are precisely $u(x) = a \chi_{\bo} \left( b A(x - x_0) \right)$ for $p = 1$ (i.e. multiples of characteristics functions of ellipsoids) and
\[
u(x) = a\left(1 + b|A(x - x_0)|^\frac p{p-1}\right)^{1-\frac pn},
\]
for $a \in \R$, $b > 0$, $x_0 \in \R^n$ and $A \in SL(n)$, where $\bo$ is the unit Euclidean ball in $\R^n$ and $SL(n)$ denotes the special linear group of $n \times n$ matrices with determinant equal to $1$.

Ever since various improvements and new affine functional inequalities have emerged in a very comprehensive literature. For affine Sobolev type inequalities on the whole $\R^n$ we refer to \cite{HJM1, HJM3, LYZ2, Z}, for affine Sobolev type trace inequalities on the half space $\R^n_+$ to \cite{NHJM, N1} and for affine $L^p$ Poincaré-Sobolev inequalities on bounded domains to \cite{LM} for $p=1$ and to \cite{ST} for $p=2$. For other affine functional inequalities we mention the references \cite{CLYZ, NHJM, HS, HJM1, HJM3, HJM4, HJS, KS1, LM, LXZ, LYZ1, LYZ2, LYZ3, Lv, N, N1, N2, Wa, Z}.

More recently, Haddad, Jiménez and Montenegro \cite{HJM5} investigated affine $L^p$ Poincaré inequalities on bounded domains $\Omega$ and solved the associated Faber-Krahn problem for any $p \geq 1$. Its solution demanded the introduction of a differential operator $\Delta^{\cal A}_p$ for $p > 1$ closely related to the affine $L^p$ energy on $W_0^{1,p}(\Omega)$ given by
\[
{\cal E}_{p,\Omega}(u) = \alpha_{n,p} \left( \int_{\s^{n-1}} \left( \int_{\Omega} | \nabla_\xi u(x) |^p\, dx \right)^{-\frac{n}{p}}\, d\sigma(\xi)\right)^{-\frac 1n},
\]
where $W_0^{1,p}(\Omega)$ denotes the completion of the space $C^\infty_0(\Omega)$ of smooth functions compactly supported in $\Omega$ with respect to the norm
\[
\Vert u \Vert_{W^{1,p}_0(\Omega)} = \Vert \nabla u \Vert_{L^p(\Omega)} = \left( \int_\Omega \vert \nabla u \vert^p\, dx \right)^{\frac 1p}.
\]

The affine $p$-Laplace operator $\Delta^{\cal A}_p = \Delta^{\cal A}_{p, \Omega}$ on $W_0^{1,p}(\Omega) \setminus \{0\}$ is defined as the nonlocal quasilinear operator in divergence form:

\[
\Delta^{\cal A}_p u = -{\rm div} \left( H_{u}^{p-1}(\nabla u) \nabla H_{u}(\nabla u) \right),
\]
where

\[
H_{u}^p(\zeta) = \alpha_{n,p}^{-n}\; {\cal E}_{p, \Omega}^{n + p}(u) \int_{\s^{n-1}} \left( \int_{\Omega} | \nabla_\xi u(x) |^p\, dx \right)^{-\frac{n+p}{p}}  |\langle \xi, \zeta \rangle|^p\, d\sigma(\xi)\ \ {\rm for}\ \zeta \in \R^n.
\]
By using a key relation satisfied by ${\cal E}_{p,\Omega}$ which works specially in the case $p=2$, we point out that $\Delta^{\cal A}_2$ coincides with the affine Laplace operator introduced by Schindler and Tintarev in \cite{ST}. The name of $\Delta^{\cal A}_p$ is inspired on two fundamental properties presented in Section 4 of \cite{HJM5}. For $p > 1$, firstly it coincides with the standard $p$-Laplace operator $\Delta_p u = - {\rm div}(|\nabla u|^{p-2} \nabla u)$ for radial functions when $\Omega$ is a ball centered at the origin and, secondly, it verifies the affine invariance property $\Delta^{\cal A}_p (u \circ T) = (\Delta^{\cal A}_p u) \circ T$ on $T^{-1}(\Omega)$ for every $u \in W^{1,p}_0(\Omega)$ and $T \in SL(n)$.

Our main goal is the study of the boundary problem

\begin{equation} \label{1.1}
\left\{
\begin{array}{rlllr}
\Delta^{\cal A}_p u &=& u^{q - 1} + \lambda u^{p - 1} & {\rm in} & \Omega, \\
u&=&0 & {\rm on} & \partial \Omega,
\end{array}\right.
\end{equation}
where $\Omega$ is a bounded open subset of $\R^n$ with smooth boundary (e.g. of $C^{2,\alpha}$ class).

Throughout this work, it is assumed that $1 < p < n$, $p < q \leq p^*$ and $\lambda$ is a real parameter. Particularly important cases are the Lane-Emden problem ($\lambda = 0$) and the Brezis-Nirenberg problem ($q = p^*$) regarding the affine $p$-Laplace operator.

A nonnegative function $u_0 \in W^{1,p}_0(\Omega) \setminus \{0\}$ is said to be a weak solution of \eqref{1.1}, if
\begin{equation} \label{1.2}
\int_\Omega H_{u_0}^{p-1}(\nabla u_0) \nabla H_{u_0}(\nabla u_0) \cdot \nabla \varphi\; dx = \int_\Omega u_0^{q - 1} \varphi\; dx + \int_\Omega \lambda u_0^{p - 1} \varphi\; dx
\end{equation}
for all $\varphi \in W^{1,p}_0(\Omega)$. If $u_0 \in C^1(\overline{\Omega})$ and $u_0 > 0$ in $\Omega$, we simply say that $u_0$ is a positive $C^1$ solution.

The left-hand side of \eqref{1.2} is precisely equal to

\[
\frac{1}{p} \frac{d}{dt} {\cal E}^p_{p,\Omega}(u_0 + t \varphi) \mid_{t = 0},
\]
according to the computation of the directional derivative of ${\cal E}_{p,\Omega}^p(u)$ at $u_0 \in W^{1,p}_0(\Omega) \setminus \{0\}$ in the direction $\varphi \in W^{1,p}_0(\Omega)$ done in the proof of Theorem 10 of \cite{HJM5}.

We are particularly interested in the existence of positive $C^1$ solution and nonexistence of nontrivial nonnegative weak solution that attains the minimum level

\[
c_{\cal A} = \inf_{u \in W^{1,p}_0(\Omega) \setminus \{0\}}\frac{{\cal E}_{p,\Omega}^p (u) - \lambda \int_{\Omega} \vert u\vert^p\, dx}{\left( \int_\Omega \vert u\vert^q\, dx \right)^{\frac{p}{q}}}.
\]
Such a solution is called least energy type solution of \eqref{1.1}.

In \cite{HJM5}, the authors also showed that the operator $\Delta^{\cal A}_p$ on $W^{1,p}_0(\Omega)$ possesses a positive principal eigenvalue $\lambda^{\cal A}_{1,p} = \lambda^{\cal A}_{1,p}(\Omega)$ characterized variationally by
\[
\lambda^{\cal A}_{1,p} = \inf \{ {\cal E}^p_{p,\Omega}(u):\ u \in W^{1,p}_0(\Omega),\ \Vert u \Vert_{L^p(\Omega)} = 1 \}.
\]

Our main theorems are affine counterparts of pioneering results dealing with problems related to the $p$-Laplace operator.

\begin{teor} \label{T1}
If $p < q < p^*$, then the affine problem \eqref{1.1} admits a positive $C^1$ least energy solution for any $\lambda < \lambda^{\cal A}_{1,p}$.
\end{teor}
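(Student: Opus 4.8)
The plan is to realize the least energy solution as a suitably rescaled minimizer of the Rayleigh-type quotient defining $c_{\cal A}$, and then to bootstrap regularity and positivity. First I would verify that $c_{\cal A}>0$ whenever $\lambda<\lambda^{\cal A}_{1,p}$. The affine Poincar\'e inequality ${\cal E}^p_{p,\Omega}(u)\geq\lambda^{\cal A}_{1,p}\int_\Omega|u|^p\,dx$, read off from the variational characterization of $\lambda^{\cal A}_{1,p}$ by homogeneity, makes the numerator coercive with respect to ${\cal E}_{p,\Omega}$:
\[
{\cal E}^p_{p,\Omega}(u)-\lambda\int_\Omega|u|^p\,dx\geq\Big(1-\frac{\lambda^+}{\lambda^{\cal A}_{1,p}}\Big){\cal E}^p_{p,\Omega}(u),
\]
with $\lambda^+=\max\{\lambda,0\}$. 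Since $u\in W^{1,p}_0(\Omega)$ extended by zero satisfies the affine Sobolev inequality \eqref{ASob}, and since $\Omega$ is bounded with $q<p^*$, the embedding $L^{p^*}(\Omega)\hookrightarrow L^q(\Omega)$ yields $(\int_\Omega|u|^q\,dx)^{p/q}\leq C\,{\cal E}^p_{p,\Omega}(u)$; combining the two bounds shows the quotient is bounded below by a positive constant.

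Next, using the $p$-homogeneity of numerator and denominator, I would fix a minimizing sequence $(u_k)$ with $\int_\Omega|u_k|^q\,dx=1$, so that ${\cal E}_{p,\Omega}(u_k)$ stays bounded. \textbf{The main obstacle is compactness:} because the comparison ${\cal E}_{p,\Omega}(u)\leq\|u\|_{W^{1,p}_0(\Omega)}$ is generally strict, a bound on ${\cal E}_{p,\Omega}(u_k)$ does not bound $(u_k)$ in $W^{1,p}_0(\Omega)$, and indeed highly anisotropic or oscillatory sequences may keep ${\cal E}_{p,\Omega}$ bounded while $\|\nabla u_k\|_{L^p}\to\infty$. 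To break this, I would exploit the affine invariance ${\cal E}_{p,T(\Omega)}(v\circ T^{-1})={\cal E}_{p,\Omega}(v)$ for $T\in SL(n)$: after putting each $u_k$ in an optimal (isotropic) $SL(n)$-position $v_k=u_k\circ T_k^{-1}$ on $\Omega_k=T_k(\Omega)$, where the affine energy becomes comparable to the Euclidean Dirichlet norm, one recovers a uniform $W^{1,p}_0$ bound for $(v_k)$ while the normalization $\int_{\Omega_k}|v_k|^q\,dx=1$ and the bound $\|v_k\|_{L^{p^*}(\Omega_k)}\leq K_{n,p}\,{\cal E}_{p,\Omega}(u_k)$ persist, since $\det T_k=1$. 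The delicate point here --- the crux of the whole argument --- is to show that the positions $T_k$ do not degenerate, i.e. that they remain in a compact subset of $SL(n)$; this I would force by ruling out the escape of $L^q$-mass along degenerating ellipsoids, so that after a subsequence $T_k\to T_0\in SL(n)$ and $\Omega_k\to\Omega_0=T_0(\Omega)$.

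With non-degeneration in hand, the direct method closes the existence step. Since $q<p^*$ the embedding $W^{1,p}_0(\Omega_0)\hookrightarrow L^q(\Omega_0)$ is compact; transferring the $v_k$ to the fixed limit domain $\Omega_0$, a further subsequence satisfies $v_k\rightharpoonup v_0$ in $W^{1,p}_0(\Omega_0)$ and $v_k\to v_0$ in $L^q(\Omega_0)$, whence $\int_{\Omega_0}|v_0|^q\,dx=1$ and $v_0\neq0$. The weak lower semicontinuity of ${\cal E}_{p,\Omega_0}$ (a property established together with the Faber-Krahn analysis in \cite{HJM5}) forces ${\cal E}^p_{p,\Omega_0}(v_0)-\lambda\int_{\Omega_0}|v_0|^p\,dx\leq c_{\cal A}$, so $v_0$ is a minimizer; transferring back by $u_0=v_0\circ T_0$ gives a minimizer on $\Omega$. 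Because $|\nabla_\xi|u_0||=|\nabla_\xi u_0|$ a.e. implies ${\cal E}_{p,\Omega}(|u_0|)={\cal E}_{p,\Omega}(u_0)$, I may assume $u_0\geq0$.

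Finally, writing the Euler-Lagrange equation for the constrained minimizer and using the directional-derivative formula quoted after \eqref{1.2}, the function $u_0$ satisfies $\Delta^{\cal A}_p u_0=\lambda u_0^{p-1}+c_{\cal A}u_0^{q-1}$ weakly; the $(p-1)$-homogeneity of $\Delta^{\cal A}_p$ together with $q>p$ lets me rescale $u_0\mapsto c_{\cal A}^{1/(q-p)}u_0$ to obtain a genuine weak solution of \eqref{1.1}. For regularity and positivity, the decisive observation is that once $u_0$ is frozen the coefficients $H_{u_0}$ are \emph{constant} in $x$, so $\Delta^{\cal A}_p$ acts as a uniformly elliptic constant-coefficient anisotropic $p$-Laplacian; standard quasilinear elliptic regularity then gives $u_0\in C^{1,\alpha}(\overline{\Omega})$, and the strong maximum principle for such operators upgrades $u_0\geq0$, $u_0\not\equiv0$ to $u_0>0$ in $\Omega$, completing the proof.
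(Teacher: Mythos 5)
Your proposal is correct and is essentially the paper's own argument: positivity of $c_{\cal A}$ from the affine Poincar\'e inequality, the direct method on the $L^q$-normalized minimizing sequence with weak lower semicontinuity of ${\cal E}_{p,\Omega}$ (Theorem \ref{T6}), compactness via the $SL(n)$-position mechanism, and regularity/positivity by freezing the coefficients $H_{u_0}$ so that $\Delta^{\cal A}_p$ becomes a constant-coefficient anisotropic $p$-Laplacian (Propositions \ref{P2} and \ref{P3}, where the uniform ellipticity you assert rests precisely on (iii) of Proposition \ref{P1} applied to $u_0\neq 0$). The one step you leave as a plan --- non-degeneration of the positions $T_k$, i.e.\ that $T_k\to\infty$ forces $u_k\to 0$ in $L^q(\Omega)$, contradicting $\Vert u_k\Vert_{L^q(\Omega)}=1$ --- is exactly the affine Rellich--Kondrachov theorem and its consequence (Theorem \ref{T7} and Corollary \ref{C2}, due to Tintarev \cite{T1}), which the paper cites rather than reproves, so your sketch closes along the same lines; note also that once the $T_k$ stay in a compact subset of $SL(n)$ you can simply pull back and obtain boundedness of $u_k$ in $W^{1,p}_0(\Omega)$ on the fixed domain $\Omega$, avoiding your detour through the varying domains $\Omega_k$.
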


An interesting consequence of this result occurs when $\lambda = 0$. In effect, for any $q \in [1, p^*]$, the affine Sobolev inequality \eqref{ASob} yields the affine $L^q$ Poincaré-Sobolev inequality on $W^{1,p}_0(\Omega)$:

\begin{equation} \label{APSob}
\mu_{p,q}^{\cal A} \Vert u \Vert_{L^q(\Omega)} \leq {\cal E}_{p,\Omega}(u)
\end{equation}
for an optimal constant $\mu_{p,q}^{\cal A} = \mu_{p,q}^{\cal A}(\Omega)$. A nonzero function $u_0 \in W^{1,p}_0(\Omega)$ is said to be an extremal for \eqref{APSob}, if this inequality becomes equality. The existence of extremals has been established in \cite{HJM5} for $q = p$ and in \cite{HX} for $1 \leq q < p$. Using these results and Theorem \ref{T1}, we readily derive

\begin{cor} \label{C1}
The sharp affine inequality \eqref{APSob} admits extremal functions if, and only if, $1 \leq q < p^*$.
\end{cor}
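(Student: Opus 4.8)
The plan is to prove the two implications separately. For sufficiency, that is $1 \le q < p^*$ implies the existence of extremals, I split the range into three cases. For $1 \le q < p$ the existence is already recorded in \cite{HX} and for $q = p$ in \cite{HJM5}, so nothing new is needed there. For $p < q < p^*$ I would invoke Theorem \ref{T1} with $\lambda = 0$: since the principal eigenvalue $\lambda^{\cal A}_{1,p}$ is positive, $\lambda = 0 < \lambda^{\cal A}_{1,p}$ is admissible, and the theorem yields a positive $C^1$ least energy solution $u_0$, i.e. a minimizer of $c_{\cal A}$ with $\lambda = 0$. Because the Rayleigh quotient defining $c_{\cal A}$ is exactly the $p$-th power of the quotient defining $\mu_{p,q}^{\cal A}$, one has $c_{\cal A} = (\mu_{p,q}^{\cal A})^p$ and $u_0$ realizes equality in \eqref{APSob}; hence $u_0$ is the desired extremal.

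For necessity I must show that no extremal exists when $q = p^*$. The key step is to identify the optimal constant with its whole-space counterpart, namely $\mu_{p,p^*}^{\cal A}(\Omega) = 1/K_{n,p}$. The lower bound $\ge 1/K_{n,p}$ is immediate: extending any $u \in W^{1,p}_0(\Omega)$ by zero to $\tilde u \in {\cal D}^{1,p}(\R^n)$ leaves both ${\cal E}_{p,\Omega}(u) = {\cal E}_p(\tilde u)$ and $\Vert u \Vert_{L^{p^*}(\Omega)} = \Vert \tilde u \Vert_{L^{p^*}(\R^n)}$ unchanged, so \eqref{ASob} forces ${\cal E}_{p,\Omega}(u) \ge \Vert u \Vert_{L^{p^*}(\Omega)} / K_{n,p}$. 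For the reverse bound I would use a concentration argument: by density of $C^\infty_0(\R^n)$ in ${\cal D}^{1,p}(\R^n)$ and continuity of the quotient ${\cal E}_p(\cdot)/\Vert \cdot \Vert_{L^{p^*}(\R^n)}$ near an extremal of \eqref{ASob}, there are compactly supported functions whose quotient is arbitrarily close to $1/K_{n,p}$; since at the critical exponent this quotient is invariant under dilations $u \mapsto u(\cdot/\epsilon)$ (both ${\cal E}_p$ and the $L^{p^*}$ norm scale like $\epsilon^{(n-p)/p}$), I may shrink and translate the support into $\Omega$ without changing the quotient, giving $\mu_{p,p^*}^{\cal A}(\Omega) \le 1/K_{n,p}$.

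Granting $\mu_{p,p^*}^{\cal A}(\Omega) = 1/K_{n,p}$, non-existence follows by contradiction. If some $u_0 \in W^{1,p}_0(\Omega) \setminus \{0\}$ were an extremal for \eqref{APSob}, its zero extension $\tilde u_0$ would achieve equality in the whole-space affine Sobolev inequality \eqref{ASob}. By the classification recalled in the Introduction, every such extremal equals $a(1 + b|A(x - x_0)|^{p/(p-1)})^{1 - p/n}$ for some $a \ne 0$, $b > 0$, $x_0 \in \R^n$ and $A \in SL(n)$, and is therefore nowhere vanishing on $\R^n$. This contradicts $\tilde u_0 \equiv 0$ on the set $\R^n \setminus \Omega$ of positive Lebesgue measure, so no extremal can exist for $q = p^*$.

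The main obstacle is the upper bound in the identification $\mu_{p,p^*}^{\cal A}(\Omega) = 1/K_{n,p}$. Unlike the gradient norm, the affine energy ${\cal E}_{p,\Omega}$ is nonlocal and non-convex, so its continuity along the approximating sequence and its behavior under the dilations and cut-offs used to embed near-extremals into $\Omega$ require care; in particular one must keep the spherical integral defining ${\cal E}_p$ bounded away from the degenerate regime where some directional energy $\int |\nabla_\xi u|^p\, dx$ collapses. Once this is handled, the scale invariance of the critical quotient together with the explicit classification of extremals makes the remaining arguments routine.
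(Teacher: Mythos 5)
Your proof is correct and follows essentially the same route as the paper: the cases $1 \leq q \leq p$ via \cite{HX} and \cite{HJM5}, the range $p < q < p^*$ via Theorem \ref{T1} with $\lambda = 0$ together with the identification of $c_{\cal A}$ with the sharp constant (your normalization $c_{\cal A} = (\mu_{p,q}^{\cal A})^p$ just makes the paper's remark $\mu_{p,q}^{\cal A} = c_{\cal A}$ precise), and nonexistence at $q = p^*$ via the rescaling identity $\mu_{p,p^*}^{\cal A} = K_{n,p}^{-1}$ combined with the fact that the classified extremals of \eqref{ASob} are nowhere vanishing and hence do not belong to $W^{1,p}_0(\Omega)$. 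Your additional care with the continuity of ${\cal E}_p$ along the approximating sequence (keeping the directional energies $\Psi_\xi$ bounded away from zero, in the spirit of Proposition \ref{P1}) merely fleshes out the paper's ``canonical rescaling'' remark without changing the argument.
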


\n Clearly, the cases $p < q < p^*$ follow from the equality $\mu_{p,q}^{\cal A} = c_{\cal A}$ and Theorem \ref{T1}. Already when $q = p^*$, exist no extremal as can easily be seen through usual arguments. In effect, a canonical rescaling immediately yields $\mu_{p,p^*}^{\cal A} = K_{n,p}^{-1}$. On the other hand, as described in the introduction, the extremal functions for the sharp affine Sobolev inequality \eqref{ASob} doesn't belong to $W^{1,p}_0(\Omega)$.

\begin{teor} \label{T2}
If $q = p^*$ and $n \geq p^2$, then the affine problem \eqref{1.1} admits a positive $C^1$ least energy solution for any $0 < \lambda < \lambda^{\cal A}_{1,p}$.
\end{teor}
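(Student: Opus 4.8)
The plan is to obtain the solution as a multiple of a minimizer of the quotient defining $c_{\cal A}$ with $q=p^*$, so that the whole issue is the attainment of $c_{\cal A}$, the critical embedding $W^{1,p}_0(\Omega)\hookrightarrow L^{p^*}(\Omega)$ being non-compact. First I would normalize minimizing sequences by $\int_\Omega|u|^{p^*}\,dx=1$ and record two facts. On one hand, since $\mu^{\cal A}_{p,p^*}=K_{n,p}^{-1}$ and the term $\lambda\int_\Omega|u|^p\,dx$ is nonnegative, one has $c_{\cal A}\le K_{n,p}^{-p}$. On the other hand, the eigenvalue bound $\lambda^{\cal A}_{1,p}\int_\Omega|u|^p\,dx\le{\cal E}^p_{p,\Omega}(u)$ yields, for $0<\lambda<\lambda^{\cal A}_{1,p}$, the coercivity estimate ${\cal E}^p_{p,\Omega}(u)-\lambda\int_\Omega|u|^p\,dx\ge(1-\lambda/\lambda^{\cal A}_{1,p})\,{\cal E}^p_{p,\Omega}(u)$, so that any minimizing sequence keeps ${\cal E}_{p,\Omega}(u_k)$ bounded.

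The decisive step is the strict inequality $c_{\cal A}<K_{n,p}^{-p}$, which is what restores compactness at the critical level. Here I would exploit the affine invariance: since ${\cal E}_{p,\Omega}$ and the $L^p,L^{p^*}$ norms are all invariant under $SL(n)$, it suffices to test with radial functions, namely the radial extremals of \eqref{ASob} (the choice $A=I$) multiplied by a radial cutoff supported in a ball $B\subset\Omega$. The key simplification is that for radial functions the affine energy reduces to the Dirichlet energy, ${\cal E}^p_{p,\Omega}(\eta U_\epsilon)=\Vert\nabla(\eta U_\epsilon)\Vert^p_{L^p(\Omega)}$, so the quotient estimate becomes verbatim the classical Brezis-Nirenberg expansion for the $p$-Laplacian. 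Plugging in the standard asymptotics as $\epsilon\to0$, the critical term reproduces $K_{n,p}^{-p}$, the term $-\lambda\int_\Omega|\eta U_\epsilon|^p\,dx$ contributes a strictly negative correction, and the truncation tail contributes a positive remainder; comparing orders shows that precisely when $n\ge p^2$ the negative $\lambda$-correction is of strictly lower order than the positive remainder (borderline, with a logarithmic enhancement, when $n=p^2$) and therefore dominates. This is exactly where the hypotheses $n\ge p^2$ and $\lambda>0$ enter, giving $c_{\cal A}<K_{n,p}^{-p}$.

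The main obstacle is the attainment itself, because of the nonconvex and nonlocal nature of ${\cal E}_{p,\Omega}$. Starting from a minimizing sequence $u_k$ with $\int_\Omega|u_k|^{p^*}\,dx=1$ and ${\cal E}_{p,\Omega}(u_k)$ bounded, the difficulty is that a bound on ${\cal E}_{p,\Omega}$ does not by itself bound $\Vert\nabla u_k\Vert_{L^p(\Omega)}$. I would again use affine invariance — ${\cal E}_{p,\Omega}(u)$ being comparable to the infimum of the Dirichlet energy over the $SL(n)$-orbit of $u$ — to renormalize by suitable $A_k\in SL(n)$ and argue that along a minimizing sequence the configurations cannot degenerate, since a degeneration would force either the $L^{p^*}$ mass to escape or the quotient to reach the critical value $K_{n,p}^{-p}$, contradicting the strict inequality of the previous paragraph. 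This reduces matters to a sequence bounded in $W^{1,p}_0(\Omega)$, from which a weak limit $u_0$ is extracted. The crux is then an affine Brezis-Lieb splitting ${\cal E}^p_{p,\Omega}(u_k)={\cal E}^p_{p,\Omega}(u_0)+{\cal E}^p_{p,\Omega}(u_k-u_0)+o(1)$, to be combined with the usual Brezis-Lieb decompositions of the $L^{p^*}$ and $L^p$ norms; controlling the spherical integral that defines ${\cal E}_{p,\Omega}$ along a weakly convergent sequence, in the absence of convexity, is the heart of the argument and the principal technical difficulty. Together with $c_{\cal A}<K_{n,p}^{-p}$, this splitting rules out vanishing and dichotomy and forces $\int_\Omega|u_0|^{p^*}\,dx=1$, so that $u_0$ attains $c_{\cal A}$.

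It remains to pass from the minimizer to a positive $C^1$ solution, which mirrors the subcritical case and is not where the difficulty lies. Replacing $u_0$ by $|u_0|$, which leaves both ${\cal E}_{p,\Omega}$ and the constraint unchanged since the directional derivatives of $|u_0|$ and of $u_0$ have equal modulus almost everywhere, I obtain a nonnegative minimizer; a Lagrange multiplier computation, using that the left-hand side of \eqref{1.2} is the derivative of $\frac1p{\cal E}^p_{p,\Omega}$, shows that a suitable rescaling $t_0u_0$ is a nonnegative weak solution of \eqref{1.1}. Finally, the regularity theory for $\Delta^{\cal A}_p$ gives $u_0\in C^1(\overline{\Omega})$ and the strong maximum principle upgrades nonnegativity to $u_0>0$ in $\Omega$, producing the desired positive $C^1$ least energy solution.
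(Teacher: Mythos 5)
Your threshold analysis and your endgame match the paper: positivity of $c_{\cal A}$ via the sharp affine Poincar\'e inequality, the strict bound $c_{\cal A}<K_{n,p}^{-p}$ for $\lambda>0$, $n\ge p^2$ via the Azorero--Peral test functions \cite{AP1} together with the radial identity ${\cal E}_p(u)=\Vert \nabla u\Vert_{L^p(\R^n)}$ (Proposition \ref{P7}, which identifies the affine constant with the classical Sobolev one) and the comparison \eqref{comp}, and the final passage to a positive $C^1$ solution via regularity and the strong maximum principle (Propositions \ref{P2} and \ref{P3}). The genuine gap is in your compactness paragraph. You rest everything on an ``affine Brezis--Lieb splitting'' ${\cal E}^p_{p,\Omega}(u_k)={\cal E}^p_{p,\Omega}(u_0)+{\cal E}^p_{p,\Omega}(u_k-u_0)+o(1)$, which you flag as the heart of the argument but do not prove --- and it is not available by any standard route. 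The affine energy is not an integral functional of $\nabla u$: it is the nonlinear aggregation $\alpha_{n,p}\bigl(\int_{\s^{n-1}}\Psi_\xi(u)^{-n/p}\,d\sigma(\xi)\bigr)^{-1/n}$. Even granting the directionwise splitting $\Psi_\xi(u_k)=\Psi_\xi(u_0)+\Psi_\xi(u_k-u_0)+o(1)$ --- which already requires almost everywhere convergence of the gradients, something a bare minimizing sequence (with no Palais--Smale structure for this nonconvex functional) does not provide --- the map $(a_\xi)\mapsto\bigl(\int a_\xi^{-n/p}\,d\sigma\bigr)^{-p/n}$ is only superadditive by the reverse Minkowski inequality, so at best an inequality survives, never the asserted asymptotic equality. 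Likewise, your $SL(n)$-renormalization step is a restatement, without proof, of the content of Tintarev's affine Rellich--Kondrachov theorem (Theorem \ref{T7}) and Corollary \ref{C2}; moreover its heuristic is inaccurate as stated, since degeneration $T_k\to\infty$ forces the \emph{subcritical} norms $\Vert u_k\Vert_{L^q(\Omega)}$, $q<p^*$, to vanish --- the critical mass need not escape.

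The paper closes this step quite differently, and you would need to adopt (or reconstruct) its devices. Theorem \ref{T7} gives $u_k\to u_0$ strongly in $L^p(\Omega)$ directly from the bound on ${\cal E}_{p,\Omega}(u_k)$; then the affine Sobolev inequality \eqref{ASob} combined with $c_{\cal A}<K_{n,p}^{-p}$ and $\lambda>0$ yields $u_0\neq 0$; Corollary \ref{C2} then bounds $u_k$ in $W^{1,p}_0(\Omega)$, and weak lower semicontinuity (Theorem \ref{T6}) gives $\Phi_{\cal A}(u_0)\le c_{\cal A}$. The decisive substitute for Brezis--Lieb is the truncation decomposition $u=T_hu+R_hu$, for which $\Psi_\xi(u)=\Psi_\xi(T_hu)+\Psi_\xi(R_hu)$ holds \emph{exactly} (disjoint gradient supports), so that reverse Minkowski --- legitimized by (iii) of Proposition \ref{P1} --- yields the clean superadditivity ${\cal E}^p_{p,\Omega}(u)\ge{\cal E}^p_{p,\Omega}(T_hu)+{\cal E}^p_{p,\Omega}(R_hu)$ (Proposition \ref{P5}); feeding this into Lemma 3.1 of Bartsch--Willem \cite{BW} and letting $h\to\infty$ forces $\Vert u_0\Vert_{L^{p^*}(\Omega)}=1$ using only weak $L^{p^*}$ convergence of truncations, with no gradient convergence whatsoever (Proposition \ref{P6}). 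Unless you can actually establish your splitting, your third paragraph does not close; replacing it with the truncation--superadditivity mechanism above repairs the proof, and the rest of your proposal is sound and parallel to the paper.
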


\begin{teor} \label{T3}
If $q = p^*$ and $n < p^2$, then there exists a constant $\lambda_* > 0$ such that the affine problem \eqref{1.1} admits a positive $C^1$ least energy solution for any $\lambda_* < \lambda < \lambda^{\cal A}_{1,p}$.
\end{teor}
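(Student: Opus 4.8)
The plan is to prove Theorem~\ref{T3} by the concentration-compactness strategy adapted to the affine setting, following the Brezis-Nirenberg philosophy but using the sharp affine Sobolev inequality \eqref{ASob} in place of the classical one. Recall that $c_{\cal A}$ is the infimum defining the least energy level with $q=p^*$, and that the canonical rescaling gives the threshold value $\mu_{p,p^*}^{\cal A}=K_{n,p}^{-1}$, so that $S_{\cal A}:=(K_{n,p})^{-p}$ plays the role of the critical compactness level. The first step is to establish the standard dichotomy: if $c_{\cal A} < S_{\cal A}$, then every minimizing sequence for $c_{\cal A}$ is relatively compact in $L^{p^*}(\Omega)$ (up to the affine symmetries), and the minimizer yields, after suitable normalization, a positive weak solution of \eqref{1.1}; regularity theory together with the properties of $\Delta_p^{\cal A}$ then upgrades it to a positive $C^1$ least energy solution. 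This compactness step is where the absence of convexity of ${\cal E}_{p,\Omega}$ and the generally strict comparison ${\cal E}_{p,\Omega}(u)\le \Vert u\Vert_{W^{1,p}_0(\Omega)}$ complicate the usual weak-lower-semicontinuity arguments, so I expect the key technical lemma to be an affine concentration-compactness principle quantifying possible loss of mass at concentration points.

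Granting the compactness step, the heart of Theorem~\ref{T3} is the strict inequality $c_{\cal A} < S_{\cal A}$, which must now be extracted in the harder low-dimensional regime $n<p^2$. The plan is to test the Rayleigh quotient defining $c_{\cal A}$ against a family built from the affine extremal functions, namely truncations of $u_\varepsilon(x)=\bigl(1+b\,\varepsilon^{-\frac{p}{p-1}}|x|^{\frac{p}{p-1}}\bigr)^{1-\frac pn}$ localized near an interior point by a fixed cutoff $\eta\in C^\infty_0(\Omega)$. Because the affine energy ${\cal E}_{p,\Omega}$ is invariant under $SL(n)$ and agrees with the $p$-Laplace energy on radial profiles when $\Omega$ is a ball, the leading-order expansions of ${\cal E}_{p,\Omega}^p(\eta u_\varepsilon)$ and of $\int_\Omega|\eta u_\varepsilon|^{p^*}$ can be computed through the radial reduction, reducing the affine estimate to the classical Brezis-Nirenberg expansion. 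The subtlety of the regime $n<p^2$ is that the correction term arising from the cutoff is of the same order as, or dominates, the gain $-\lambda\int_\Omega|\eta u_\varepsilon|^p$ from the lower-order term; whereas for $n\ge p^2$ the $\lambda$-term wins for every $\lambda>0$ (this is Theorem~\ref{T2}), here one only obtains $c_{\cal A}<S_{\cal A}$ once $\lambda$ exceeds an explicit threshold.

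The decisive computation is therefore the $\varepsilon$-expansion
\[
\frac{{\cal E}_{p,\Omega}^p(\eta u_\varepsilon)-\lambda\int_\Omega|\eta u_\varepsilon|^p\,dx}{\bigl(\int_\Omega|\eta u_\varepsilon|^{p^*}\,dx\bigr)^{p/p^*}}
= S_{\cal A}+C_1\,\varepsilon^{\,n-p}-C_2\,\lambda\,\varepsilon^{\,p}+o(\varepsilon^{\,p}),
\]
where $C_1,C_2>0$ are dimensional constants computable from the profile, and the exponent $p$ on the $\lambda$-term reflects that $\int_\Omega|\eta u_\varepsilon|^p$ is of order $\varepsilon^p$ precisely when $n<p^2$ (so $p^*\cdot\frac{n-p}{n}<p$ fails to hold in the borderline sense that forces this regime). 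When $n<p^2$ one has $p<n-p$ is false in general; rather the competition is between the $\varepsilon^{n-p}$ concentration remainder and the $\varepsilon^{p}$ linear gain, and since $p$ need not be smaller than $n-p$, the sign of the leading correction is governed by whether $\lambda$ is large enough to make the negative $\varepsilon^{\min(p,n-p)}$ term dominate. Setting $\lambda_*$ to be the infimum of those $\lambda$ for which the bracketed expression drops below $S_{\cal A}$ for small $\varepsilon$ gives the threshold in the statement, and for every $\lambda_*<\lambda<\lambda_{1,p}^{\cal A}$ one secures $c_{\cal A}<S_{\cal A}$. The main obstacle I anticipate is precisely making the affine expansion rigorous: one must control ${\cal E}_{p,\Omega}^p(\eta u_\varepsilon)$ through its definition as an integral over $\s^{n-1}$ of directional $L^p$ gradient integrals, show that the cutoff error in each direction integrates to the claimed $\varepsilon^{n-p}$ order, and verify that the nonlinear averaging over $\xi\in\s^{n-1}$ does not destroy the leading-order classical asymptotics — this is where the nonlocal, non-convex structure of the affine energy demands the most care.
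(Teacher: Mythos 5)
Your plan has two genuine gaps. First, the compactness step is asserted rather than proved: you postulate an ``affine concentration-compactness principle quantifying loss of mass at concentration points,'' but this is precisely the hard point in the affine setting, since ${\cal E}_{p,\Omega}$ is nonlocal and non-convex and the usual P.\,L.~Lions localization arguments do not apply to it. The paper's substitute is Proposition~\ref{P6}: whenever $0 < c_{\cal A} < K_{n,p}^{-p}$, minimizing sequences converge to a minimizer in $X$, proved not by measure-theoretic concentration-compactness but by the truncation superadditivity ${\cal E}^p_{p,\Omega}(u) \geq {\cal E}^p_{p,\Omega}(T_h u) + {\cal E}^p_{p,\Omega}(R_h u)$ (Proposition~\ref{P5}, which rests on the reverse Minkowski inequality for negative exponents and on (iii) of Proposition~\ref{P1}), combined with the affine Rellich-Kondrachov theorem (Theorem~\ref{T7}), Corollary~\ref{C2}, Theorem~\ref{T6} and a lemma of Bartsch-Willem. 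Without an ingredient of this type your first step is a wish, not a proof. Second, your bubble expansion carries the wrong exponents for general $p$ and, more seriously, does not deliver the theorem as stated: for the $p$-Laplace profile the cutoff remainder is $O(\varepsilon^{(n-p)/(p-1)})$, and $\int_\Omega |\eta u_\varepsilon|^p$ is of order $\varepsilon^{(n-p)/(p-1)}$ (not $\varepsilon^p$) exactly in the regime $n < p^2$ --- your claim that the order $\varepsilon^p$ occurs when $n<p^2$ is backwards, as $\varepsilon^p$ is the $n > p^2$ regime. So both competing terms have the \emph{same} order, the net sign is a comparison of constants, and the threshold $\lambda_*$ you extract is a ratio of expansion constants with no a priori relation to $\lambda^{\cal A}_{1,p}$; nothing in your argument prevents $\lambda_* \geq \lambda^{\cal A}_{1,p}$, in which case the interval $(\lambda_*, \lambda^{\cal A}_{1,p})$ is empty and the conclusion vacuous.

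The paper's actual route for $n < p^2$ avoids bubbles entirely and settles both issues at once: it tests the quotient with a principal eigenfunction $\varphi^{\cal A}_{1,p}$ normalized by $\Vert \varphi^{\cal A}_{1,p} \Vert_{L^{p^*}(\Omega)} = 1$ and sets $\lambda_* = \lambda^{\cal A}_{1,p} - K_{n,p}^{-p} \Vert \varphi^{\cal A}_{1,p} \Vert^{-p}_{L^p(\Omega)}$. This is positive because ${\cal E}^p_{p,\Omega}(\varphi^{\cal A}_{1,p}) > K_{n,p}^{-p}$ strictly (the extremals of \eqref{ASob} do not belong to $W^{1,p}_0(\Omega)$), and it is automatically $< \lambda^{\cal A}_{1,p}$, so the interval is nonempty by construction; for $\lambda > \lambda_*$ one then reads off $c_{\cal A} < K_{n,p}^{-p}$ directly, and Propositions~\ref{P6}, \ref{P2} and \ref{P3} conclude. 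Incidentally, your anticipated obstacle --- that the nonlinear averaging over $\xi \in \s^{n-1}$ might destroy the classical asymptotics --- is a non-issue for upper bounds: the comparison \eqref{comp} together with Proposition~\ref{P7} (which identifies $K_{n,p}$ with the classical Sobolev constant via the radial equality) reduces any such estimate to the classical one, and this is exactly how the paper handles the case $n \geq p^2$ in Theorem~\ref{T2}; the bubble machinery is simply never needed for Theorem~\ref{T3}.
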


\begin{teor} \label{T4}
If $p < q \leq p^*$, then the affine problem \eqref{1.1} admits no nontrivial least energy weak solution for any $\lambda \geq \lambda^{\cal A}_{1,p}$.
\end{teor}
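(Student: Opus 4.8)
The plan is to argue by contradiction, playing off a Nehari-type identity satisfied by any weak solution against the sign of the minimal level $c_{\cal A}$ in the regime $\lambda \geq \lambda^{\cal A}_{1,p}$. The point is that the identity forces a certain quantity to be strictly positive, while the attainment of $c_{\cal A}$ forces it to be nonpositive; these are incompatible, so no least energy solution can exist.

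First I would record the Nehari identity. Suppose $u_0 \in W^{1,p}_0(\Omega) \setminus \{0\}$ is a nontrivial nonnegative weak solution of \eqref{1.1}. Since $u_0$ is itself an admissible test function, taking $\varphi = u_0$ in \eqref{1.2} produces on the right-hand side $\int_\Omega u_0^q\, dx + \lambda \int_\Omega u_0^p\, dx$. For the left-hand side I would invoke that it equals $\frac{1}{p}\frac{d}{dt}{\cal E}^p_{p,\Omega}(u_0 + t\varphi)\mid_{t=0}$, together with the fact that ${\cal E}_{p,\Omega}$ is positively $1$-homogeneous — immediate from its definition, as replacing $u$ by $tu$ scales each inner integral $\int_\Omega |\nabla_\xi u|^p\, dx$ by $t^p$. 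Hence ${\cal E}^p_{p,\Omega}$ is positively $p$-homogeneous, and Euler's identity gives that the left-hand side of \eqref{1.2} with $\varphi = u_0$ is exactly ${\cal E}^p_{p,\Omega}(u_0)$. Therefore
\[
{\cal E}^p_{p,\Omega}(u_0) - \lambda \int_\Omega u_0^p\, dx = \int_\Omega u_0^q\, dx > 0,
\]
where strict positivity uses $u_0 \not\equiv 0$.

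Next I would pin down the sign of $c_{\cal A}$ using the principal eigenfunction $e_1$, i.e. the extremal for the $q=p$ case whose existence is guaranteed in \cite{HJM5}, normalized so that ${\cal E}^p_{p,\Omega}(e_1) = \lambda^{\cal A}_{1,p}\int_\Omega e_1^p\, dx$. The variational characterization of $\lambda^{\cal A}_{1,p}$ yields ${\cal E}^p_{p,\Omega}(u) \geq \lambda^{\cal A}_{1,p}\int_\Omega |u|^p\, dx$ for all $u$. When $\lambda = \lambda^{\cal A}_{1,p}$ the numerator in $c_{\cal A}$ is then nonnegative, so $c_{\cal A} \geq 0$, while evaluation at $e_1$ makes it vanish, giving $c_{\cal A} = 0$. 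When $\lambda > \lambda^{\cal A}_{1,p}$, testing the quotient with $e_1$ yields numerator $(\lambda^{\cal A}_{1,p} - \lambda)\int_\Omega e_1^p\, dx < 0$, so $c_{\cal A} < 0$. To close the argument, if $u_0$ were a least energy solution it would attain $c_{\cal A}$, so its scale-invariant quotient equals $c_{\cal A}$; but the Nehari identity above combined with $\int_\Omega u_0^q\, dx > 0$ forces that quotient to be strictly positive, contradicting $c_{\cal A} \leq 0$ in both cases. I expect the only genuinely delicate point to be the rigorous verification of the Euler/homogeneity identity for ${\cal E}^p_{p,\Omega}$ — namely that the Gâteaux-derivative expression in \eqref{1.2} evaluated at $\varphi = u_0$ reproduces $p\,{\cal E}^p_{p,\Omega}(u_0)$ — but this follows directly from the $p$-homogeneity of the functional and the derivative formula quoted just before the theorems.
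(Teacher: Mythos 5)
Your proposal is correct and follows essentially the same route as the paper: testing \eqref{1.2} with $\varphi = u_0$ (justified by the $p$-homogeneity of ${\cal E}^p_{p,\Omega}$ and the directional-derivative formula) to get ${\cal E}^p_{p,\Omega}(u_0) - \lambda \int_\Omega u_0^p\,dx = \int_\Omega u_0^q\,dx > 0$, hence $c_{\cal A} > 0$, while evaluating the quotient at the principal eigenfunction $\varphi^{\cal A}_{1,p}$ gives $c_{\cal A} \leq (\lambda^{\cal A}_{1,p} - \lambda)\int_\Omega (\varphi^{\cal A}_{1,p})^p\,dx \leq 0$, a contradiction. The only cosmetic difference is that you split the cases $\lambda = \lambda^{\cal A}_{1,p}$ and $\lambda > \lambda^{\cal A}_{1,p}$, whereas the paper handles both at once via the single inequality $c_{\cal A} \leq 0$.
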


\begin{teor} \label{T5}
If $q = p^*$ and $\Omega$ is star-shaped, then the affine problem \eqref{1.1} admits no nontrivial nonnegative weak solution for any $\lambda \leq 0$.
\end{teor}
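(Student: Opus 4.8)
The plan is to establish an affine Pohozaev identity and then combine it with the energy identity obtained by testing the equation against the solution itself, exactly as in the classical star-shaped obstruction. Fix a nontrivial nonnegative weak solution $u$ of \eqref{1.1} with $q=p^*$; by the regularity theory for $\Delta^{\cal A}_p$ one upgrades $u$, via \eqref{1.2}, to a positive $C^1(\overline\Omega)$ solution. Once $u$ is frozen, the map $\zeta\mapsto H_u(\zeta)$ is a fixed norm on $\R^n$ with $H_u^p$ of class $C^1$, positive definite and $p$-homogeneous. Writing $A:=H_u^{p-1}(\nabla u)\nabla H_u(\nabla u)=\tfrac1p\nabla_\zeta(H_u^p)(\nabla u)$ for the affine flux, so that $\Delta^{\cal A}_pu=-{\rm div}\,A$, I would first record the two algebraic facts on which everything rests: Euler's relation $A\cdot\nabla u=H_u^p(\nabla u)$ (from $p$-homogeneity) and the global identity $\int_\Omega H_u^p(\nabla u)\,dx={\cal E}^p_{p,\Omega}(u)$, which follows by inserting $\zeta=\nabla u$ in the definition of $H_u^p$ and using the normalization of ${\cal E}_{p,\Omega}$.

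The core step is the pointwise divergence identity
\[
p\,(x\cdot\nabla u)\,\Delta^{\cal A}_pu
={\rm div}\bigl(H_u^p(\nabla u)\,x-p\,(x\cdot\nabla u)\,A\bigr)-(n-p)\,H_u^p(\nabla u),
\]
which is obtained by expanding both divergences and using Euler's relation together with $\sum_jA_j\,(x\cdot\nabla)\partial_ju=\tfrac1p(x\cdot\nabla)\bigl(H_u^p(\nabla u)\bigr)$. Integrating over $\Omega$, applying the divergence theorem, and reducing the boundary terms through $\nabla u=(\partial_\nu u)\nu$ on $\partial\Omega$ (so that $H_u^p(\nabla u)=|\partial_\nu u|^pH_u^p(\nu)$ and $(A\cdot\nu)(x\cdot\nabla u)=|\partial_\nu u|^pH_u^p(\nu)(x\cdot\nu)$ by homogeneity), I arrive at the affine Pohozaev identity
\[
\frac{n-p}{p}\,{\cal E}^p_{p,\Omega}(u)-\frac nq\int_\Omega u^q\,dx-\frac{n\lambda}{p}\int_\Omega u^p\,dx
=\frac{p-1}{p}\int_{\partial\Omega}|\partial_\nu u|^p\,H_u^p(\nu)\,(x\cdot\nu)\,d\sigma,
\]
where the volume terms on the right used $\int_\Omega u^{q-1}(x\cdot\nabla u)\,dx=-\tfrac nq\int_\Omega u^q\,dx$ and the analogous formula for the $u^{p-1}$ term.

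It then remains pure algebra. Testing the weak formulation \eqref{1.2} with $\varphi=u$ gives the energy identity ${\cal E}^p_{p,\Omega}(u)=\int_\Omega u^q\,dx+\lambda\int_\Omega u^p\,dx$; since $q=p^*$ we have $\tfrac nq=\tfrac{n-p}{p}$, and substituting into the Pohozaev identity collapses the volume terms to yield
\[
\frac{p-1}{p}\int_{\partial\Omega}|\partial_\nu u|^p\,H_u^p(\nu)\,(x\cdot\nu)\,d\sigma=\lambda\int_\Omega u^p\,dx.
\]
For $\Omega$ star-shaped one has $x\cdot\nu\ge0$ on $\partial\Omega$, while $H_u^p(\nu)>0$ because $H_u$ is a positive definite norm and $p>1$; hence the left-hand side is nonnegative, forcing $\lambda\int_\Omega u^p\,dx\ge0$ and therefore $\lambda\ge0$ since $\int_\Omega u^p>0$. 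This already rules out solutions for $\lambda<0$. For the borderline $\lambda=0$ the boundary integral vanishes; by Hopf's lemma $\partial_\nu u<0$ everywhere on $\partial\Omega$, so $x\cdot\nu\equiv0$ on $\partial\Omega$, which is impossible for a bounded domain. This contradiction completes the nonexistence for every $\lambda\le0$.

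The main obstacle is the rigorous justification of the Pohozaev identity for this nonlocal and degenerate operator, not the algebra above. Three points need care: that the weak solution has enough regularity for the boundary flux to be well defined and for the divergence identity to hold in the distributional sense, where the hypothesis $u\in C^1(\overline\Omega)$ is essential; that the degeneracy of $H_u^p$ at the origin, where $\nabla H_u$ need not be continuous, does not spoil the identity on $\{\nabla u=0\}$, which is handled by noting that $A$ extends continuously by $0$ there since $H_u^{p-1}(\nabla u)\to0$; and that the nonlocal character of $H_u$ is harmless precisely because $H_u$ is frozen once $u$ is fixed, so that the global coupling enters only through the single scalar identity $\int_\Omega H_u^p(\nabla u)\,dx={\cal E}^p_{p,\Omega}(u)$. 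Once these are settled, the argument is a faithful affine analogue of the classical star-shaped obstruction.
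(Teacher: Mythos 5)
Your overall strategy is the same as the paper's: upgrade the weak solution to a positive $C^1(\overline\Omega)$ function with nonvanishing normal derivative, freeze the norm $H_u$, prove a Pohozaev identity for the frozen quasilinear operator, combine it with the energy identity $\int_\Omega H_u^p(\nabla u)\,dx={\cal E}^p_{p,\Omega}(u)=\int_\Omega u^q\,dx+\lambda\int_\Omega u^p\,dx$, and conclude via star-shapedness and Hopf's lemma. The genuine gap sits at your core step: the pointwise divergence identity
\[
p\,(x\cdot\nabla u)\,\Delta^{\cal A}_pu={\rm div}\bigl(H_u^p(\nabla u)\,x-p\,(x\cdot\nabla u)\,A\bigr)-(n-p)\,H_u^p(\nabla u)
\]
is a $C^2$ computation --- expanding ${\rm div}\bigl(p(x\cdot\nabla u)A\bigr)$ and the term $\sum_j A_j\,(x\cdot\nabla)\partial_j u$ requires second derivatives of $u$ --- whereas solutions of the frozen degenerate equation are only $C^{1,\alpha}(\overline\Omega)$ for $p\neq 2$, never $C^2$ in general. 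Your three ``points needing care'' do not discharge this: the continuous extension of $A$ by $0$ on $\{\nabla u=0\}$ fixes only the definition of the flux, and asserting that $u\in C^1(\overline\Omega)$ ``is essential'' is not a proof that the identity holds distributionally at that regularity. This is exactly the Pucci--Serrin regularity issue, and it is the one nontrivial technical input of the paper's argument: Proposition \ref{P4} obtains the identity by invoking Theorems 1 and 2 of Degiovanni--Musesti--Squassina \cite{DMS}, which are valid for $C^1$ solutions of $-{\rm div}({\cal H}(\nabla u))=f(u)$ under strict convexity of $\zeta\mapsto H_{u_0}^p(\zeta)$ (a hypothesis the paper verifies for the frozen norm). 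Without importing such a result, or supplying an approximation argument replacing the missing $C^2$ regularity, your derivation of the Pohozaev identity remains formal.

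Two further remarks. First, there is a sign inconsistency in your write-up: integrating your (correct) pointwise identity yields the boundary term with coefficient $-\frac{p-1}{p}$, i.e.\ $\frac{n-p}{p}{\cal E}^p_{p,\Omega}(u)-\frac nq\int_\Omega u^q\,dx-\frac{n\lambda}{p}\int_\Omega u^p\,dx=-\frac{p-1}{p}\int_{\partial\Omega}|\partial_\nu u|^p H_u^p(\nu)(x\cdot\nu)\,d\sigma$, not $+\frac{p-1}{p}$ as displayed; your \emph{final} identity $\frac{p-1}{p}\int_{\partial\Omega}|\partial_\nu u|^p H_u^p(\nu)(x\cdot\nu)\,d\sigma=\lambda\int_\Omega u^p\,dx$ is nevertheless the correct one (it agrees with the paper's), so this is a transcription slip rather than a fatal error, but as written your two displays contradict each other. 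Second, your endgame is actually slightly more careful than the paper's: you only use $x\cdot\nu\ge 0$ and handle the borderline $\lambda=0$ by Hopf's lemma forcing $x\cdot\nu\equiv 0$ on $\partial\Omega$, which is impossible, whereas the proof of Theorem \ref{T5} asserts $x\cdot\nu>0$ outright and gets a strict inequality in one stroke; both reach the same contradiction. Everything else --- Euler's relation, the identity $\int_\Omega H_u^p(\nabla u)\,dx={\cal E}^p_{p,\Omega}(u)$, the boundary reduction via $\nabla u=(\partial_\nu u)\nu$ --- matches the paper.
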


The well-known versions of all theorems from \ref{T1} to \ref{T5} for the $p$-Laplace operator were established at various times in history. For works associated to Theorem \ref{T1}, see \cite{AP1, GP, O}, to Theorems \ref{T2} and \ref{T3}, see \cite{BN} for $p = 2$ and \cite{AP1, AP2, GV} for $1 < p < n$ and to Theorem \ref{T5}, see \cite{P} for $p = 2$ and \cite{DMS, GV, PS1} for $1 < p < n$. Lastly, closely related to Theorem \ref{T4}, we recall the results of nonexistence of positive $C^1$ solution for any $\lambda \geq \lambda_{1,p} = \lambda_{1,p}(\Delta_p)$ and of existence of nontrivial $C^1$ solution for any $\lambda > \lambda_{1,p}$ different from all min-max type eigenvalues of $\Delta_p$ on $W^{1,p}_0(\Omega)$, see \cite{GV} and \cite{DL}, respectively. Whether these results are true or not when considering the operator $\Delta^{\cal A}_p$ is an open question. For other related references, we refer to the works \cite{AR, CV, CV1, E1, E, GY, HHZ}.

The proof of existence consists in finding minimizers for the quotient defining the minimum level $c_{\cal A}$. The main obstacles are caused by the affine term  ${\cal E}^p_{p,\Omega}$. More precisely, the functional $u \in W^{1,p}_0(\Omega) \mapsto {\cal E}^p_{p,\Omega}(u)$ is not convex and its geometry is non-coercive since there are unbounded sequences in $W^{1,p}_0(\Omega)$ with bounded affine $L^p$ energy. Examples of such sequences are constructed on the pages 17 and 18 of \cite{HJM5}. Already, the non-convexity follows from the reverse inequality of Proposition \ref{P5} which becomes strict for many functions in $W^{1,p}_0(\Omega)$. Despite the absence of an adequate variational structure, we prove in Theorem \ref{T6} that the referred energy functional is weakly lower semicontinuous on $W^{1,p}_0(\Omega)$.

The affine context also affects strongly the study of existence of nonnegative weak solution in the subcritical and critical cases. In both ones, we make use of an affine Rellich-Kondrachov compactness theorem, established recently by Tintarev \cite{T1} for $1 < p < n$ and by the authors \cite{LM} for $p = 1$, which states that the affine ball $B^{\mathcal A}_p(\Omega) = \{u \in W^{1,p}_0(\Omega):\ {\cal E}_{p,\Omega}(u) \leq 1\}$ is compact in $L^q(\Omega)$ for every $1 \leq q < p^*$ (Theorem \ref{T7}), and also of a quite useful consequence of its proof (Corollary \ref{C2}). When $q = p^*$, it is well-known that the claim of compactness usually fails. However, using additional tools, we prove that minimizing sequences of $c_{\cal A}$ are compact in $L^{p^*}(\Omega)$ for lower energy levels $c_{\cal A}$, see Propositions \ref{P5}, \ref{P6} and \ref{P7}.

In Section 3, we focus on the $C^1$ regularity and positivity of weak solutions of \eqref{1.1} as well as a related Pohozaev type identity to be used in proof of Theorem \ref{T5}.

It is worth mentioning that (iii) of Proposition \ref{P1} in the next section will play a fundamental role in the proof of the most ingredients quoted above.

\section{Brief summary on the variational setting}
For $1 < p < n$ and $p < q \leq p^*$, let $\Phi_{\cal A} : W^{1,p}_0(\Omega) \rightarrow \R$ be the functional given by
\[
\Phi_{\cal A}(u) = {\cal E}_{p,\Omega}^p (u) - \lambda \int_{\Omega} \vert u\vert^p\; dx
\]
and $c_{\cal A} = \inf\limits_{u \in X} \Phi_{\cal A}(u)$ be its least energy level on the set $X = \{u \in W^{1,p}_0(\Omega):\ \Vert u \Vert_{L^q(\Omega)} = 1\}$.

Clearly, $\Phi_{\cal A}$ is well-defined and $c_{\cal A}$ is always finite for any $\lambda \in \R$, since the embedding $W^{1,p}_0(\Omega) \hookrightarrow L^{p^*}(\Omega)$ is continuous and the inequality for $p \geq 1$
\begin{equation} \label{comp}
{\cal E}_{p, \Omega} (u) \leq \Vert \nabla u \Vert_{L^p(\Omega)}
\end{equation}
holds for any $u \in W^{1,p}_0(\Omega)$, see page 14 of \cite{HJM5}.

Let $u_0 \in X$ be a nonnegative minimizer of $\Phi_{\cal A}$ and assume that $c_{\cal A} > 0$. Using the directional derivative of the affine term ${\cal E}_{p,\Omega}^p (u)$ described in the introduction, one easily checks that $u_0$ is a nonnegative weak solution of the problem

\begin{equation} \label{EL}
\left\{
\begin{array}{rlllr}
\Delta^{\cal A}_p u &=& c_{\cal A} u^{q - 1} + \lambda u^{p - 1} & {\rm in} & \Omega, \\
u&=&0 & {\rm on} & \partial \Omega.
\end{array}\right.
\end{equation}
Consequently, thanks to $(p-1)$-homogeneity of $\Delta^{\cal A}_p$, a straightforward argument implies that $c_{\cal A}^{\frac{1}{q-p}} u_0$ is a nonnegative weak solution of \eqref{1.1} of least energy type.

The first key point in the study of existence of minimizers for $c_{\cal A}$ is the weak lower semicontinuity of the functional $u \in W^{1,p}_0(\Omega) \mapsto {\cal E}_{p,\Omega}(u)$. The property has been recently proved in \cite{HJM5} through an elegant argument based on its Theorem 9 and Lemma 1. We next provide an alternative elementary proof.

\begin{teor} \label{T6}
If $u_k \rightharpoonup u_0$ weakly in $W^{1,p}_0(\Omega)$, then
\[
{\cal E}_{p,\Omega} (u_0) \leq \liminf_{k \rightarrow \infty} {\cal E}_{p,\Omega} (u_k).
\]
\end{teor}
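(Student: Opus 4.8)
The plan is to argue directionally and then integrate over the sphere. For $\xi \in \s^{n-1}$ write $H_\xi(u) = \int_\Omega |\nabla_\xi u|^p\, dx$, so that $\mathcal{E}_{p,\Omega}(u) = \alpha_{n,p}\, F(u)^{-1/n}$ with $F(u) = \int_{\s^{n-1}} H_\xi(u)^{-n/p}\, d\sigma(\xi)$. Since $t \mapsto \alpha_{n,p} t^{-1/n}$ is continuous and strictly decreasing on $(0,\infty)$, the asserted inequality $\mathcal{E}_{p,\Omega}(u_0) \le \liminf_{k} \mathcal{E}_{p,\Omega}(u_k)$ is equivalent to the reverse inequality $F(u_0) \ge \limsup_{k} F(u_k)$, that is, to the sequential weak upper semicontinuity of $F$. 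The case $u_0 = 0$ is trivial, since $\mathcal{E}_{p,\Omega} \ge 0$ and $\mathcal{E}_{p,\Omega}(0) = 0$, so I would assume $u_0 \neq 0$ from here on.

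First I would record the pointwise (in $\xi$) lower semicontinuity of $H_\xi$. For each fixed $\xi$ the map $u \mapsto \nabla_\xi u = \nabla u \cdot \xi$ is bounded and linear from $W^{1,p}_0(\Omega)$ into $L^p(\Omega)$, so $u_k \rightharpoonup u_0$ forces $\nabla_\xi u_k \rightharpoonup \nabla_\xi u_0$ weakly in $L^p(\Omega)$. As the $L^p$ norm is weakly lower semicontinuous, this gives $H_\xi(u_0) \le \liminf_{k} H_\xi(u_k)$ for every $\xi \in \s^{n-1}$, and since $t \mapsto t^{-n/p}$ is decreasing the pointwise bound $\limsup_{k} H_\xi(u_k)^{-n/p} \le H_\xi(u_0)^{-n/p}$ follows.

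The crux is to upgrade this pointwise estimate to the integrated statement $\limsup_{k} F(u_k) \le F(u_0)$, which is precisely a reverse Fatou inequality for the integrands $\xi \mapsto H_\xi(u_k)^{-n/p}$ and therefore demands a uniform integrable dominating function; this is the main obstacle. I would resolve it through a directional Poincar\'e inequality: integrating $|\nabla_\xi u|$ along segments parallel to $\xi$ and using that $u$ vanishes outside $\Omega$ yields $\Vert u \Vert_{L^p(\Omega)}^p \le ({\rm diam}\,\Omega)^p\, H_\xi(u)$ for every $\xi$ and every $u \in W^{1,p}_0(\Omega)$, hence the uniform lower bound $H_\xi(u) \ge ({\rm diam}\,\Omega)^{-p}\Vert u \Vert_{L^p(\Omega)}^p$. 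Since $u_0 \neq 0$, the compact embedding $W^{1,p}_0(\Omega) \hookrightarrow L^p(\Omega)$ gives $\Vert u_k \Vert_{L^p(\Omega)} \to \Vert u_0 \Vert_{L^p(\Omega)} > 0$, so this sequence stays bounded below, say by $m > 0$ for large $k$; consequently $H_\xi(u_k)^{-n/p} \le ({\rm diam}\,\Omega)^{n} m^{-n}$, a finite constant which is integrable over the finite-measure sphere. With this domination in hand, the reverse Fatou lemma yields $\limsup_{k} F(u_k) \le \int_{\s^{n-1}} \limsup_{k} H_\xi(u_k)^{-n/p}\, d\sigma \le F(u_0)$, and the equivalence noted at the outset completes the proof. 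Should a uniform lower bound of this kind already be available from item (iii) of Proposition \ref{P1}, it may be invoked in place of the directional Poincar\'e argument.
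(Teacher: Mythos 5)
Your proof is correct, and its overall architecture coincides with the paper's: reduce to $u_0 \neq 0$, obtain the directional lower semicontinuity $\Psi_\xi(u_0) \leq \liminf_k \Psi_\xi(u_k)$ for each fixed $\xi$, establish a lower bound on $\Psi_\xi(u_k)$ uniform in both $\xi \in \s^{n-1}$ and large $k$, and conclude by reverse Fatou together with the monotonicity and continuity of $t \mapsto \alpha_{n,p}\, t^{-1/n}$. Where you genuinely diverge is at the crux, the uniform lower bound (the paper's inequality \eqref{loweri}). The paper proves it softly, by contradiction: if it failed, one extracts $\xi_k \rightarrow \tilde{\xi}$ with $\Psi_{\xi_k}(u_k) \leq k^{-1}$, transfers the smallness from $\xi_k$ to $\tilde{\xi}$ via the estimate $\Psi_{\tilde{\xi}}(u_k) \leq C_1 \Vert \xi_k - \tilde{\xi} \Vert^p + 2^{p-1}k^{-1}$ (using only that weakly convergent sequences are bounded), and then combines \eqref{linf} with Proposition \ref{P1} to force $u_0 = 0$. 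You instead prove the bound directly and quantitatively, via the directional Poincar\'e inequality $\Vert u \Vert^p_{L^p(\Omega)} \leq ({\rm diam}\,\Omega)^p\, \Psi_\xi(u)$, whose constant is uniform in $\xi$, combined with the compactness of the embedding $W^{1,p}_0(\Omega) \hookrightarrow L^p(\Omega)$, which yields $\Vert u_k \Vert_{L^p(\Omega)} \rightarrow \Vert u_0 \Vert_{L^p(\Omega)} > 0$. Your route buys an explicit constant and in effect quantifies the implication (iii) $\Rightarrow$ (i) of Proposition \ref{P1} (your segment-integration argument is precisely the quantitative form of the zero-trace argument used there), while the paper's route avoids Rellich compactness and reuses Proposition \ref{P1} as a black box. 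One caveat about your closing remark: Proposition \ref{P1}(iii) as stated is purely qualitative ($\Psi_\xi(u) = 0$ for a single $\xi$ implies $u = 0$) and does not by itself supply a bound uniform in $\xi$ and $k$ along the sequence --- producing that uniformity is exactly the job of the paper's contradiction argument --- so your Poincar\'e inequality cannot simply be replaced by a citation of that proposition, and it is in fact the cleanest self-contained substitute for it.
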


We make use of the following result which will also play a strategic role along the work:

\begin{propo} \label{P1}
Let $u \in W^{1,p}_0(\Omega)$ with $p \geq 1$. The sentences are equivalent:

\begin{itemize}
\item[{\rm (i)}] $u = 0$;
\item[{\rm (ii)}] ${\cal E}_{p,\Omega} (u) = 0$;
\item[{\rm (iii)}] $\Psi_\xi(u) = 0$ for some $\xi \in \s^{n-1}$, where $\Psi_\xi(u) = \int_{\Omega} | \nabla_\xi u(x) |^p\, dx$.
\end{itemize}
\end{propo}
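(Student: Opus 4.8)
The plan is to establish the cycle of implications (i) $\Rightarrow$ (ii) $\Rightarrow$ (iii) $\Rightarrow$ (i), keeping in mind the identity
\[
{\cal E}_{p,\Omega}(u) = \alpha_{n,p} \left( \int_{\s^{n-1}} \Psi_\xi(u)^{-\frac np}\, d\sigma(\xi) \right)^{-\frac 1n}.
\]
The implication (i) $\Rightarrow$ (ii) is immediate: if $u=0$ then $\nabla u = 0$ and hence $\Psi_\xi(u)=0$ for every $\xi$, so the inner integrand equals $+\infty$ identically, forcing ${\cal E}_{p,\Omega}(u)=0$ under the natural convention ${\cal E}_{p,\Omega}(0)=0$.

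For (ii) $\Rightarrow$ (iii) I would argue by contraposition. Suppose (iii) fails, that is, $\Psi_\xi(u) > 0$ for every $\xi \in \s^{n-1}$. First I would check that the map $\xi \mapsto \Psi_\xi(u) = \int_\Omega |\nabla u \cdot \xi|^p\, dx$ is continuous on $\s^{n-1}$: for $\xi_k \to \xi$ one has $|\nabla u \cdot \xi_k|^p \to |\nabla u \cdot \xi|^p$ pointwise with the uniform bound $|\nabla u \cdot \xi_k|^p \le |\nabla u|^p \in L^1(\Omega)$, so dominated convergence applies. Since $\s^{n-1}$ is compact and $\Psi_\cdot(u)$ is continuous and strictly positive, it attains a positive minimum $m>0$. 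Consequently $\Psi_\xi(u)^{-n/p} \le m^{-n/p}$ and the defining integral is bounded by $m^{-n/p}\,\sigma(\s^{n-1}) < \infty$, whence ${\cal E}_{p,\Omega}(u) > 0$; this proves (ii) $\Rightarrow$ (iii).

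The core of the proof — and the main obstacle — is (iii) $\Rightarrow$ (i). Assume $\Psi_\xi(u)=0$ for some fixed $\xi$; then $\nabla u \cdot \xi = 0$ a.e.\ in $\Omega$. I would extend $u$ by zero to $\tilde u \in W^{1,p}(\R^n)$, whose distributional gradient is the zero-extension of $\nabla u$, so that $\partial_\xi \tilde u = 0$ a.e.\ in $\R^n$ and $\tilde u$ has compact support in $\overline\Omega$. Mollifying, set $u_\varepsilon = \tilde u * \rho_\varepsilon \in C^\infty(\R^n)$; then $\partial_\xi u_\varepsilon = (\partial_\xi \tilde u) * \rho_\varepsilon = 0$ identically, so along every line $t \mapsto x + t\xi$ the smooth function $u_\varepsilon$ is constant. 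Because $u_\varepsilon$ is supported in a bounded neighborhood of $\overline\Omega$, each such line eventually leaves the support, forcing that constant to vanish; hence $u_\varepsilon \equiv 0$. Letting $\varepsilon \to 0$ and using $u_\varepsilon \to \tilde u$ in $L^p(\R^n)$ gives $\tilde u = 0$, i.e.\ $u = 0$. The delicate points to handle carefully are the validity of the zero-extension identity for the weak gradient and the commutation $\partial_\xi(\tilde u * \rho_\varepsilon) = (\partial_\xi \tilde u)*\rho_\varepsilon$; alternatively one may bypass mollification by invoking the absolute continuity on lines characterization of $W^{1,p}$, which yields directly that $\tilde u$ is constant on almost every line parallel to $\xi$ and therefore identically zero.
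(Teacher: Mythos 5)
Your proposal is correct, and two of its three implications coincide with the paper's proof: for (ii) $\Rightarrow$ (iii) the paper uses exactly your continuity-plus-compactness argument (a uniform positive lower bound on $\xi \mapsto \Psi_\xi(u)$ over $\s^{n-1}$ forces ${\cal E}_{p,\Omega}(u) > 0$), and for (i) $\Rightarrow$ (ii) the paper simply invokes the comparison ${\cal E}_{p,\Omega}(u) \leq \Vert \nabla u \Vert_{L^p(\Omega)}$ (its inequality \eqref{comp}) where you argue from the definition with the convention ${\cal E}_{p,\Omega}(0) = 0$; both are fine. The genuine divergence is in (iii) $\Rightarrow$ (i). The paper stays inside $\Omega$: from $\nabla_\xi u = 0$ a.e.\ it asserts that $u$ is constant on line segments of $\Omega$ in the direction $\xi$ and concludes from the zero trace on $\partial\Omega$ --- a terse argument that implicitly rests on the absolute-continuity-on-lines characterization of Sobolev functions and on the trace condition, and that for non-convex $\Omega$ requires handling each maximal segment separately. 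You instead extend $u$ by zero to $\tilde u \in W^{1,p}(\R^n)$ (legitimate precisely because $u \in W^{1,p}_0(\Omega)$), mollify, observe that a smooth compactly supported function with $\partial_\xi u_\varepsilon \equiv 0$ must vanish since every line parallel to $\xi$ leaves the support, and pass to the limit in $L^p$. Your route is more robust and self-contained: it avoids trace theory and any geometric discussion of how lines meet $\Omega$, at the modest cost of the two standard facts you flag (the zero-extension identity for the weak gradient and the commutation of convolution with weak derivatives); the paper's version buys brevity. Either argument is acceptable, and your fallback via the ACL characterization is essentially a rigorous rendering of what the paper's one-line claim leaves implicit.
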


\begin{proof}
Clearly, by \eqref{comp}, (i) implies (ii). If the claim (iii) occurs, then $\nabla_\xi u(x) = 0$ almost everywhere in $\Omega$ for some $\xi \in \s^{n-1}$, so $u$ is constant on line segments in $\Omega$ in the direction $\xi$. Then, since $u$ has zero trace on $\partial \Omega$, it follows the claim (i).

It remains to show that (ii) implies (iii). In fact, arguing by contradiction, assume that $\Psi_{\xi}(u) > 0$ for all $\xi \in \s^{n-1}$. Thanks to the continuity of the map $\xi \in \s^{n-1} \mapsto \Psi_{\xi}(u)$, there exists a constant $c > 0$ so that $\Psi_{\xi}(u) \geq c$ for all $\xi \in \s^{n-1}$. But the lower bound immediately yields ${\cal E}_{p,\Omega}(u) \geq c^{1/p} \alpha_{n,p} (n \omega_n)^{-1/n} > 0$. Thus, (ii) fails and we end the proof.
\end{proof}

\begin{proof}[Proof of Theorem \ref{T6}]
Let $u_k$ be a sequence converging weakly to $u_0$ in $W^{1,p}_0(\Omega)$. If $u_0 = 0$ then, by (ii) of Proposition \ref{P1}, the statement follows trivially.

Assume $u_0 \neq 0$. Thanks to the convexity of the functional $u \in W^{1,p}_0(\Omega) \mapsto \Psi_{\xi}(u)$, for any $\xi \in \s^{n-1}$, we have

\begin{equation} \label{linf}
\Psi_{\xi}(u_0) \leq \liminf_{k \rightarrow \infty} \Psi_{\xi}(u_k).
\end{equation}
We now ensure the existence of a constant $c_0 > 0$ and an integer $k_0 \geq 1$, both independent of $\xi \in \s^{n-1}$, such that

\begin{equation} \label{loweri}
\Psi_\xi(u_k) \geq c_0
\end{equation}
for all $k \geq k_0$. Otherwise, module a renaming of indexes, we get a sequence $\xi_k \in \s^{n-1}$ such that $\xi_k \rightarrow \tilde{\xi}$ and $\Psi_{\xi_k}(u_k) \leq k^{-1}$. Since $u_k$ is bounded in $W^{1,p}_0(\Omega)$, we find a constant $C_1 > 0$ such that
\[
\Psi_{\tilde{\xi}}(u_k) \leq C_1 \Vert \xi_k - \tilde{\xi} \Vert^p + 2^{p-1}k^{-1}.
\]
Letting $k \rightarrow \infty$ and using \eqref{linf}, we get $\Psi_{\tilde{\xi}}(u_0) = 0$. But, by Proposition \ref{P1}, we obtain the contradiction $u_0 = 0$, and so \eqref{loweri} is satisfied.

Finally, combining \eqref{linf}, \eqref{loweri} and Fatou's lemma, we derive

\[
\int_{\s^{n-1}} \Psi_{\xi}(u_0)^{-\frac{n}{p}}\, d\sigma(\xi) \geq \int_{\s^{n-1}} \limsup_{k \rightarrow \infty} \Psi_{\xi}(u_k)^{-\frac{n}{p}}\, d\sigma(\xi) \geq \limsup_{k \rightarrow \infty} \int_{\s^{n-1}} \Psi_{\xi}(u_k)^{-\frac{n}{p}}\, d\sigma(\xi),
\]
and hence

\begin{eqnarray*}
{\cal E}_{p,\Omega} (u_0) &=& \alpha_{n,p} \left( \int_{\s^{n-1}} \Psi_{\xi}(u_0)^{-\frac{n}{p}}\, d\sigma(\xi) \right)^{-\frac{1}{n}} \\
&\leq& \liminf_{k \rightarrow \infty} \alpha_{n,p} \left( \int_{\s^{n-1}} \Psi_{\xi}(u_k)^{-\frac{n}{p}}\, d\sigma(\xi) \right)^{-\frac{1}{n}} \\
&=& \liminf_{k \rightarrow \infty} {\cal E}_{p,\Omega} (u_k).
\end{eqnarray*}
\end{proof}

The second point was recently established by Tintarev for $1 < p < n$ (see Theorem 6.5.3 of \cite{T1}) and by the authors for $p = 1$ (see Theorem 4.1 of \cite{LM}) and is stated as follows.

\begin{teor} \label{T7}
Let $1 \leq p < n$ and $B^{\mathcal A}_p(\Omega) = \{u \in W^{1,p}_0(\Omega):\ {\cal E}_{p,\Omega}(u) \leq 1\}$. The set $B^{\cal A}_p(\Omega)$ is compact in $L^q(\Omega)$ for every $1 \leq q < p^*$.
\end{teor}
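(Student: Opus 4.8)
The plan is to prove relative compactness of $B^{\cal A}_p(\Omega)$ in $L^q(\Omega)$ for $1 \le q < p^*$ by a dichotomy that circumvents the non-coercivity pointed out in the introduction, namely that a bound on ${\cal E}_{p,\Omega}$ does not control $\Vert \nabla u \Vert_{L^p(\Omega)}$. As a preliminary, applying the affine Sobolev inequality \eqref{ASob} to the zero extension of $u$ gives $\Vert u \Vert_{L^{p^*}(\Omega)} \le K_{n,p}\,{\cal E}_{p,\Omega}(u) \le K_{n,p}$ for every $u \in B^{\cal A}_p(\Omega)$; since $\Omega$ is bounded, $B^{\cal A}_p(\Omega)$ is bounded in every $L^q(\Omega)$ with $1 \le q \le p^*$. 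Given a sequence $(u_k) \subset B^{\cal A}_p(\Omega)$, I would pass to a subsequence with $\Vert u_k \Vert_{L^p(\Omega)} \to \ell \ge 0$ and split into two cases.

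If $\ell = 0$, then $u_k \to 0$ in $L^p(\Omega)$; interpolating between $L^p$ and $L^{p^*}$ and using the uniform bound $\Vert u_k \Vert_{L^{p^*}} \le K_{n,p}$ yields $u_k \to 0$ in $L^q(\Omega)$ for $p < q < p^*$, while $1 \le q \le p$ follows from H\"older's inequality. Hence the subsequence converges in $L^q(\Omega)$.

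If $\ell > 0$, the heart of the matter is a convex-geometric estimate showing that, along this subsequence, $(u_k)$ is actually uniformly bounded in $W^{1,p}_0(\Omega)$, after which the classical Rellich-Kondrachov theorem supplies a further subsequence converging strongly in $L^q(\Omega)$ for $q < p^*$. I would extend $\Psi_\xi(u) = \int_\Omega |\nabla u \cdot \xi|^p\,dx$ to a $p$-homogeneous function on $\R^n$ and consider the symmetric convex body $K_u = \{\xi \in \R^n:\ \Psi_\xi(u) \le 1\}$, which is bounded with nonempty interior for $u \neq 0$ by Proposition \ref{P1}. A polar-coordinate computation gives $\int_{\s^{n-1}} \Psi_\xi(u)^{-n/p}\,d\sigma(\xi) = n\,{\rm vol}(K_u)$, so that ${\cal E}_{p,\Omega}(u) \le 1$ is equivalent to ${\rm vol}(K_u) \ge \alpha_{n,p}^n/n =: c_0$. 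On the other hand, fixing $R$ with $\Omega \subset B_R(0)$, the one-dimensional Poincare inequality applied along each line in the direction $\xi$ gives the directional estimate $\Vert u \Vert_{L^p(\Omega)}^p \le (2R)^p\,\Psi_\xi(u)$ for every $\xi \in \s^{n-1}$; by homogeneity this forces $K_{u_k} \subset B_{\rho_k}(0)$ with $\rho_k = 2R/\Vert u_k \Vert_{L^p} \le 4R/\ell$ bounded. Now a symmetric convex body of volume at least $c_0$ contained in $B_{\rho_k}(0)$ lies in the intersection of a slab and $B_{\rho_k}(0)$, hence has minimal width at least $c_0/(\omega_{n-1}\rho_k^{n-1})$; equivalently it contains a ball $rB_1$ with $r \ge c_0/(2\omega_{n-1}\rho_k^{n-1})$ bounded below uniformly in $k$. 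Then $re_i \in K_{u_k}$, i.e. $\Psi_{e_i}(u_k) \le r^{-p}$, whence $\Vert \nabla u_k \Vert_{L^p(\Omega)}^p \le C(n,p)\sum_{i} \Psi_{e_i}(u_k) \le C$ uniformly, which is the sought uniform $W^{1,p}_0(\Omega)$ bound.

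Combining the two cases, every sequence in $B^{\cal A}_p(\Omega)$ has a subsequence converging in $L^q(\Omega)$, so the set is relatively compact; closedness in $L^q(\Omega)$ then follows from Theorem \ref{T6}, since an $L^q$-limit is either $0$ or arises as a weak $W^{1,p}_0(\Omega)$-limit of a $W^{1,p}_0$-bounded subsequence, for which weak lower semicontinuity gives ${\cal E}_{p,\Omega} \le 1$. The step I expect to be the main obstacle is controlling the directions where $\Psi_\xi(u_k)$ may blow up; the key insight making it tractable is that the fixed bounded support of functions in $W^{1,p}_0(\Omega)$, through the directional Poincare inequality, caps the diameter of $K_{u_k}$, so that the volume lower bound (equivalent to the affine energy bound) upgrades to a uniform inner-ball bound and thus to a genuine gradient bound, but only once $\Vert u_k \Vert_{L^p}$ is bounded away from zero, which is exactly what the dichotomy secures.
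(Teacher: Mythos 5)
Your argument is correct for $1<p<n$, and it takes a genuinely different route from the one the paper relies on: the paper does not prove Theorem \ref{T7} itself, but cites Tintarev (Theorem 6.5.3 of \cite{T1}) and \cite{LM}, where the mechanism is an $SL(n)$ normalization --- one produces matrices $T_k \in SL(n)$ such that $u_k \circ T_k$ is bounded in ${\cal D}^{1,p}(\R^n)$, applies Rellich--Kondrachov when $(T_k)$ stays bounded, and shows $u_k \to 0$ in $L^q(\Omega)$ when $T_k \to \infty$. You replace this by a dichotomy on $\ell = \lim \Vert u_k \Vert_{L^p(\Omega)}$ together with a self-contained convex-geometric step: since $\xi \mapsto \Psi_\xi(u)^{1/p}$ is a norm for $u \neq 0$ (exactly (iii) of Proposition \ref{P1}), the polar-coordinate identity ${\cal E}_{p,\Omega}(u) = \alpha_{n,p}\left(n\,{\rm vol}(K_u)\right)^{-1/n}$ turns the energy constraint into a volume lower bound on the unit ball $K_u$; the directional Poincar\'e inequality on the bounded domain caps the circumradius of $K_{u_k}$ by $2R/\Vert u_k \Vert_{L^p(\Omega)}$; and the slab estimate ${\rm vol}(K) \leq 2 h_K(\theta)\,\omega_{n-1}\rho^{n-1}$, valid for every direction $\theta$, gives $h_K(\theta) \geq c_0/(2\omega_{n-1}\rho^{n-1})$ and hence, by symmetry of $K_{u_k}$, a centered inner ball of uniform radius, i.e. a genuine $W^{1,p}_0(\Omega)$ bound. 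All steps check out, including the interpolation in the case $\ell = 0$. As for what each route buys: the cited proof sits inside a general concentration-compactness theory and does not exploit boundedness of $\Omega$ the way you do, whereas your argument is elementary and quantitative, makes transparent that degeneration ($T_k \to \infty$ in the paper's sketch, long thin bodies $K_{u_k}$ in yours) can occur only when $\Vert u_k \Vert_{L^p(\Omega)} \to 0$, and delivers Corollary \ref{C2} as an immediate byproduct rather than as a consequence extracted from someone else's proof.

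One caveat concerns $p = 1$, which the statement includes. Your relative-compactness argument goes through verbatim there (Rellich--Kondrachov holds for bounded sets in $W^{1,1}_0(\Omega)$), but your closedness step invokes weak limits of $W^{1,p}_0(\Omega)$-bounded sequences, which requires reflexivity and thus $p > 1$. This is not just a gap in your proof: $B^{\cal A}_1(\Omega)$ is in fact not closed in $L^q(\Omega)$, since normalized smooth approximations of the characteristic function of a ball stay in the affine unit ball yet converge in $L^1(\Omega)$ to a multiple of that characteristic function, a $BV$ function lying outside $W^{1,1}_0(\Omega)$ --- which is precisely why \cite{LM} formulates the $p=1$ result in the $BV$ setting. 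So read your conclusion at $p=1$ as relative compactness; that is all the paper ever uses, and only for $1 < p < n$.
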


Given a sequence $u_k$ in $B^{\cal A}_p(\Omega)$, the proof of this result involves the existence of matrices $T_k \in SL(n)$ such that $u_k \circ T_k$ is bounded in ${\cal D}^{1,p}(\R^n)$. If $T_k \rightarrow \infty$ is proved there that $u_k \rightarrow 0$ in $L^q(\Omega)$. This fact leads us to a simple consequence that deserves to be highlighted.

\begin{cor} \label{C2}
Let $u_k$ be a sequence in $B^{\cal A}_p(\Omega)$ such that $u_k \rightarrow u_0$ strongly in $L^q(\Omega)$ for some $1 \leq q < p^*$. If $u_0 \neq 0$, then $u_k$ is bounded in $W^{1,p}_0(\Omega)$.
\end{cor}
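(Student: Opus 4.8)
The plan is to argue by contradiction, exploiting the dichotomy built into the proof of Theorem \ref{T7}. Suppose that $u_k$ is \emph{not} bounded in $W^{1,p}_0(\Omega)$; passing to a subsequence (not relabeled), we may assume $\Vert \nabla u_k \Vert_{L^p(\Omega)} \to \infty$. Since $u_k \in B^{\cal A}_p(\Omega)$, that is ${\cal E}_{p,\Omega}(u_k) \leq 1$, the construction carried out in the proof of Theorem \ref{T7} supplies matrices $T_k \in SL(n)$ such that $v_k := u_k \circ T_k$ is bounded in ${\cal D}^{1,p}(\R^n)$. The whole argument then reduces to locating this subsequence on the correct side of the alternative recalled just before the statement.

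The crucial step I would carry out next is to show that necessarily $T_k \to \infty$. Indeed, assume some subsequence of $T_k$ stayed bounded in $SL(n)$. Because $SL(n)$ is closed and the constraint $\det T_k = 1$ couples the norms of $T_k$ and $T_k^{-1}$, any bounded subsequence admits a further subsequence converging to some $T \in SL(n)$, along which the inverses $T_k^{-1}$ remain bounded as well. The change of variables $\nabla u_k(x) = (T_k^{-1})^{\top} (\nabla v_k)(T_k^{-1} x)$ combined with $\vert \det T_k \vert = 1$ then yields $\Vert \nabla u_k \Vert_{L^p(\Omega)}^p \leq \Vert T_k^{-1} \Vert^p\, \Vert \nabla v_k \Vert_{L^p(\R^n)}^p$, so that $\Vert \nabla u_k \Vert_{L^p(\Omega)}$ would be bounded along that sub-subsequence, contradicting $\Vert \nabla u_k \Vert_{L^p(\Omega)} \to \infty$. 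Hence no bounded subsequence exists and $T_k \to \infty$.

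To finish, I would invoke the fact highlighted right before the corollary: for a sequence in $B^{\cal A}_p(\Omega)$, the condition $T_k \to \infty$ forces $u_k \to 0$ in $L^q(\Omega)$ for every $1 \leq q < p^*$. This directly contradicts the hypothesis $u_k \to u_0$ in $L^q(\Omega)$ with $u_0 \neq 0$, since the latter gives $\Vert u_k \Vert_{L^q(\Omega)} \to \Vert u_0 \Vert_{L^q(\Omega)} > 0$. The contradiction establishes that $u_k$ is bounded in $W^{1,p}_0(\Omega)$. I expect the only delicate point to be the intermediate claim $T_k \to \infty$: one must track the $SL(n)$-structure carefully, using the determinant-one normalization both to bound $T_k^{-1}$ from a bounded $T_k$ and to make a nonvanishing $L^q$-limit incompatible with a degenerating affine rescaling. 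The remaining ingredients — the change of variables and the subsequence extraction — are routine.
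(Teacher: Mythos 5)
Your proof is correct and is essentially the paper's own argument: the paper gives no separate proof of Corollary \ref{C2}, presenting it as an immediate consequence of the remark preceding it, and your write-up is exactly that remark fleshed out — if $u_0 \neq 0$ then $T_k \to \infty$ is ruled out by the dichotomy from the proof of Theorem \ref{T7}, and a bounded (sub)sequence $T_k$ in $SL(n)$, whose inverses stay bounded via the adjugate and $\det T_k = 1$, transfers the ${\cal D}^{1,p}(\R^n)$ bound on $v_k = u_k \circ T_k$ back to a $W^{1,p}_0(\Omega)$ bound on $u_k$ by the change of variables you state. The only details you add beyond the paper's intended reasoning (the contradiction scaffolding and the explicit $\Vert \nabla u_k \Vert_{L^p(\Omega)}^p \leq \Vert T_k^{-1} \Vert^p \Vert \nabla v_k \Vert_{L^p(\R^n)}^p$ estimate) are sound.
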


\section{Some properties of weak solutions}

We next present some important properties satisfied by weak solutions of \eqref{1.1}. We begin with a result on $C^1$ regularity regarding critical nonlinearities.

\begin{propo} \label{P2}
Let $\Omega$ be a bounded domain with $C^{2,\alpha}$ boundary and $f: \Omega \times \R \rightarrow \R$ be a $C^1$ function satisfying
\[
\vert f(x,t) \vert \leq b(x) (\vert t \vert^{p^*-1} + 1)
\]
for all $(x,t) \in \Omega \times \R$, where $b \in L^{\infty}(\Omega)$. If $u_0 \in W^{1,p}_0(\Omega) \setminus \{0\}$ is a weak solution of the problem
\[
\left\{
\begin{array}{rlllr}
\Delta^{\cal A}_p u &=& f(x,u) & {\rm in} & \Omega, \\
u&=&0 & {\rm on} & \partial \Omega,
\end{array}\right.
\]
then $u_0 \in C^{1,\alpha}(\overline{\Omega})$.
\end{propo}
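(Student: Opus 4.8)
The plan is to exploit the fact that, once the solution $u_0$ is fixed, the nonlocal weights appearing in $H_{u_0}$ become mere constants, so that $\Delta^{\cal A}_p$ reduces along $u_0$ to a genuinely \emph{local} quasilinear operator of $p$-Laplacian type. Indeed, since $u_0\neq 0$, part (iii) of Proposition \ref{P1} gives $\Psi_\xi(u_0)>0$ for every $\xi\in\s^{n-1}$; combined with the continuity of $\xi\mapsto\Psi_\xi(u_0)$ on the compact sphere and the trivial bound $\Psi_\xi(u_0)\leq\Vert\nabla u_0\Vert_{L^p(\Omega)}^p$, this yields constants $0<w_{\min}\leq w_{\max}<\infty$ with
\[
w_{\min}\leq w(\xi):=\Psi_\xi(u_0)^{-\frac{n+p}{p}}\leq w_{\max}\quad\text{for all }\xi\in\s^{n-1}.
\]
Hence $H_{u_0}(\zeta)$ depends only on $\zeta$, and since $H_{u_0}^{p-1}(\zeta)\nabla H_{u_0}(\zeta)=\frac1p\nabla_\zeta\big(H_{u_0}^p(\zeta)\big)$, the weak formulation satisfied by $u_0$ reads $\int_\Omega A(\nabla u_0)\cdot\nabla\varphi\,dx=\int_\Omega f(x,u_0)\varphi\,dx$ for all $\varphi\in W^{1,p}_0(\Omega)$, where
\[
A(\zeta)=\alpha_{n,p}^{-n}\,{\cal E}_{p,\Omega}^{n+p}(u_0)\int_{\s^{n-1}}w(\xi)\,|\langle\xi,\zeta\rangle|^{p-2}\langle\xi,\zeta\rangle\,\xi\,d\sigma(\xi).
\]
Thus $u_0$ is a weak solution of the local equation $-{\rm div}(A(\nabla u_0))=f(x,u_0)$ in $\Omega$.

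Next I would verify that $A$ satisfies the classical structure conditions for degenerate/singular quasilinear regularity. The field $A$ is positively $(p-1)$-homogeneous and, differentiating under the integral sign, belongs to $C^1(\R^n\setminus\{0\})$ with
\[
\partial_{\zeta_k}A_i(\zeta)=\alpha_{n,p}^{-n}{\cal E}_{p,\Omega}^{n+p}(u_0)(p-1)\int_{\s^{n-1}}w(\xi)|\langle\xi,\zeta\rangle|^{p-2}\xi_i\xi_k\,d\sigma(\xi).
\]
Because $p>1$, the spherical integral $\int_{\s^{n-1}}|\langle\xi,e\rangle|^{p-2}\,d\sigma(\xi)$ is finite and, by rotational invariance, independent of $e\in\s^{n-1}$; together with $w\leq w_{\max}$ this gives the growth bounds $|A(\zeta)|\leq\Lambda|\zeta|^{p-1}$ and $|\partial_\zeta A(\zeta)|\leq\Lambda|\zeta|^{p-2}$. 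For the ellipticity lower bound it suffices, by homogeneity, to treat $|\zeta|=1$ and to bound from below the quadratic form $Q_e(\eta)=\int_{\s^{n-1}}|\langle\xi,e\rangle|^{p-2}\langle\xi,\eta\rangle^2\,d\sigma(\xi)$, since $\partial_\zeta A(e)\eta\cdot\eta\geq\alpha_{n,p}^{-n}{\cal E}_{p,\Omega}^{n+p}(u_0)(p-1)w_{\min}\,Q_e(\eta)$. For each fixed $e$ the form $Q_e$ is positive definite, as $Q_e(\eta)=0$ forces $\langle\xi,\eta\rangle=0$ for a.e. $\xi$ and hence $\eta=0$. To make the lower bound uniform in $e$, I would argue by contradiction: if $\inf_{e}\min_{|\eta|=1}Q_e(\eta)=0$, there are $e_k\to e_*$ and $\eta_k\to\eta_*$ with $|\eta_*|=1$ and $Q_{e_k}(\eta_k)\to 0$, contradicting $\liminf_k Q_{e_k}(\eta_k)\geq Q_{e_*}(\eta_*)>0$ by Fatou's lemma. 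This yields $\partial_\zeta A(\zeta)\eta\cdot\eta\geq\lambda|\zeta|^{p-2}|\eta|^2$ with $\lambda>0$ depending on $u_0$.

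With these structure conditions in hand, the regularity follows from the standard theory once the critical growth of $f$ is absorbed. First I would show $u_0\in L^\infty(\Omega)$: since $-{\rm div}(A(\nabla u_0))=f(x,u_0)$ is uniformly elliptic of $p$-Laplacian type and $|f(x,u_0)|\leq\Vert b\Vert_\infty(|u_0|^{p^*-1}+1)$, a Moser/Brezis--Kato truncation argument (using $u_0\in L^{p^*}(\Omega)$ as the starting integrability and the critical Sobolev embedding) bootstraps $u_0$ to every $L^s(\Omega)$ and then to $L^\infty(\Omega)$. Once $u_0\in L^\infty(\Omega)$, the right-hand side $f(\cdot,u_0)$ lies in $L^\infty(\Omega)$, and the interior $C^{1,\alpha}_{\rm loc}$ estimates of DiBenedetto and Tolksdorf together with Lieberman's boundary regularity for equations with the above structure on the $C^{2,\alpha}$ domain $\Omega$ give $u_0\in C^{1,\alpha}(\overline\Omega)$.

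I expect the main obstacle to be the verification of uniform ellipticity in the singular range $1<p<2$, where $|\langle\xi,\zeta\rangle|^{p-2}$ is unbounded and one must simultaneously control the integrable singularity (to obtain finite growth constants and the $C^1$ regularity of $A$ by differentiation under the integral) and secure the uniform positive lower bound on $Q_e$ across the sphere; the accompanying $L^\infty$ bound under critical growth is the other delicate point, being the step where the borderline exponent $p^*$ is handled.
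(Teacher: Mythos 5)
Your proposal is correct and follows essentially the same route as the paper: freezing $u_0$ turns $\Delta^{\cal A}_p$ into a local quasilinear operator whose uniform ellipticity rests on (iii) of Proposition \ref{P1} (the structure conditions you derive are exactly those the paper imports from pages 25--26 of \cite{HJM5}), and the critical growth is absorbed by the same Brezis--Kato mechanism, which the paper implements concretely via the Guedda--V\'eron rewriting $g(x,t)=\frac{f(x,u_0(x))}{\vert u_0(x)\vert^{p-1}+1}(\vert t\vert^{p-1}+1)$ with coefficient $b_0\in L^{n/p}(\Omega)$ and the truncation test functions $\varphi=\psi_k(u_0)$ of Proposition 1.2 in \cite{GV}. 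The only cosmetic differences are that the paper stops the iteration at $u_0\in L^s(\Omega)$ for every $s$ (so that $f(\cdot,u_0)\in L^s(\Omega)$ for some $s>n/p$) and then quotes Proposition 4 of \cite{HJM5} for the global $C^{1,\alpha}(\overline{\Omega})$ conclusion, whereas you push to $L^\infty(\Omega)$ and invoke the DiBenedetto--Tolksdorf interior and Lieberman boundary estimates directly.
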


\begin{proof}
We first show that $u_0 \in L^s(\Omega)$ for any $s \geq 1$. Note that $u_0$ can be seen as a weak solution of

\[
\left\{
\begin{array}{rlllr}
\Delta^{\cal A}_p u &=& g(x,u) & {\rm in} & \Omega, \\
u&=&0 & {\rm on} & \partial \Omega,
\end{array}\right.
\]
where $g: \Omega \times \R \rightarrow \R$ is given by

\[
g(x,t) = \frac{f(x, u_0(x))}{\vert u_0(x) \vert^{p-1} + 1} (\vert t \vert^{p-1} + 1).
\]
Clearly, $g$ is a Caratheodory function and satisfies $\vert g(x,t)\vert \leq b_0(x) (\vert t\vert^{p-1} + 1)$ for all $(x,t) \in \Omega \times \R$, where $b_0$ belongs to $L^{n/p}(\Omega)$, since $\vert f(x,u_0) \vert \leq b(x) (\vert u_0 \vert^{p^*-p} \vert u_0 \vert^{p-1} + 1)$ and $\vert u_0\vert^{p^*-p} \in L^{n/p}(\Omega)$.

Following now the proof of Proposition 1.2 of \cite{GV}, we consider the test function $\varphi = \psi_k(u_0)$ for each positive number $k$, where $\psi_k(t) = \int_0^t \eta^\prime_k(\theta)^p\, d\theta$ and $\eta_k$ is the $C^1$ function given by $\eta_k(\theta) = {\rm sign}(\theta) \vert \theta \vert^{\frac{s}{p}}$ for $\vert \theta \vert \leq k$ and by $\eta_k(\theta) = {\rm sign}(\theta) [\frac{s}{p} k^{\frac{s}{p} - 1} \vert \theta \vert + (1 - \frac{s}{p}) k^{\frac{s}{p}}]$ for $\vert \theta \vert > k$. Then, we derive
\[
\int_\Omega \eta_k^\prime(u_0)^p\; H_{u_0}^{p-1}(\nabla u_0) \nabla H_{u_0}(\nabla u_0) \cdot \nabla u_0\, dx = \int_\Omega g(x,u_0) \psi_k(u_0)\, dx.
\]
On the other hand, using the $1$-homogeneity of $H_{u_0}(\zeta)$ on $\zeta$ and (iii) of Proposition \ref{P1} ($u_0 \neq 0$), we obtain a constant $c_0 > 0$, depending on $u_0$, such that
\[
\int_\Omega \eta_k^\prime(u_0)^p\, H_{u_0}^{p-1}(\nabla u_0) \nabla H_{u_0}(\nabla u_0) \cdot \nabla u_0\, dx = \int_\Omega \eta_k^\prime(u_0)^p\; H_{u_0}^p(\nabla u_0)\; dx \geq c_0 \int_\Omega \eta_k^\prime(u_0)^p\; \vert \nabla u_0 \vert^p\, dx.
\]
Thanks to this inequality, the same arguments as in \cite{GV}, consisting in the well-known De Giorgi-Nash-Moser's iterative scheme, can be applied and the first claim follows. Consequently, $f(x,u_0) \in L^s(\Omega)$ for any $s > n/p$ and thus Proposition 4 of \cite{HJM5} gives $u_0 \in C^{1, \alpha}(\overline{\Omega})$.
\end{proof}

The next result concerns with strong maximum principle and Hopf's lemma for an equation associated to the operator $\Delta^{\cal A}_p$.

\begin{propo} \label{P3}
Let $\Omega$ be a bounded domain with $C^{2,\alpha}$ boundary and $u_0 \in W^{1,p}_0(\Omega) \setminus \{0\}$ be a weak solution of $\Delta^{\cal A}_p u = \lambda |u|^{p-2} u + f(x)$ in $\Omega$, where $\lambda \leq 0$ and $f \in L^\infty(\Omega)$. If $f \geq 0$ in $\Omega$ and $f \not\equiv 0$ in $\Omega$, then $u_0 > 0$ in $\Omega$ and $\frac{\partial u_0}{\partial \nu} < 0$ on $\partial \Omega$, where $\nu$ denotes the outward unit norm field to $\Omega$.
\end{propo}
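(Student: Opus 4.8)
The plan is to exploit the fact that, although $\Delta^{\cal A}_p$ is nonlocal, once the solution $u_0$ is frozen the function $\zeta \mapsto H_{u_0}(\zeta)$ becomes a \emph{fixed} norm on $\R^n$, so $u_0$ solves the \emph{local} anisotropic quasilinear equation
\[
-{\rm div}\, A(\nabla u_0) = \lambda |u_0|^{p-2} u_0 + f(x) \ \ {\rm in}\ \Omega, \qquad A(\zeta) := H_{u_0}^{p-1}(\zeta)\, \nabla H_{u_0}(\zeta),
\]
in the weak sense $\int_\Omega A(\nabla u_0)\cdot\nabla\varphi\,dx = \int_\Omega (\lambda |u_0|^{p-2}u_0 + f)\varphi\,dx$ for all $\varphi \in W^{1,p}_0(\Omega)$; note that $A(\zeta)\cdot\zeta = H_{u_0}^p(\zeta)$ by $1$-homogeneity of $H_{u_0}$. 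The decisive ingredient is (iii) of Proposition \ref{P1}: since $u_0 \neq 0$, the continuous map $\xi \mapsto \Psi_\xi(u_0)$ is bounded below by a positive constant on the compact sphere $\s^{n-1}$, and is bounded above because $u_0 \in W^{1,p}_0(\Omega)$. Feeding these two-sided bounds into the defining integral for $H_{u_0}^p$ and using the rotational identity $\int_{\s^{n-1}} |\langle \xi, \zeta\rangle|^p\, d\sigma(\xi) = c_{n,p} |\zeta|^p$, I would obtain constants $0 < c_0 \leq C_0$, depending on $u_0$, with $c_0 |\zeta|^p \leq H_{u_0}^p(\zeta) \leq C_0 |\zeta|^p$ for every $\zeta \in \R^n$. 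Hence $A$ is a uniformly elliptic, monotone field of Finsler $p$-Laplace type, and $C^{1,\alpha}(\overline{\Omega})$ regularity of $u_0$ follows from the argument of Proposition \ref{P2} applied to the right-hand side $\lambda |t|^{p-2}t + f(x)$.

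Next I would establish nonnegativity. Testing the weak formulation with $\varphi = u_0^- = \max\{-u_0,0\} \in W^{1,p}_0(\Omega)$ and using $A(\nabla u_0)\cdot\nabla u_0 = H_{u_0}^p(\nabla u_0) \geq c_0 |\nabla u_0|^p$ on $\{u_0 < 0\}$, the left-hand side is bounded above by $-c_0 \int_\Omega |\nabla u_0^-|^p\,dx$. On the other hand, on $\{u_0<0\}$ one has $\lambda |u_0|^{p-2}u_0\, u_0^- = -\lambda (u_0^-)^p \geq 0$ because $\lambda \leq 0$, while $f u_0^- \geq 0$ because $f \geq 0$; hence the right-hand side is nonnegative. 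Since the two sides are equal, $\int_\Omega |\nabla u_0^-|^p\,dx = 0$, so $u_0^- \equiv 0$ by the zero trace, giving $u_0 \geq 0$.

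With $u_0 \geq 0$ and $u_0 \not\equiv 0$ the equation reads $-{\rm div}\, A(\nabla u_0) + (-\lambda) u_0^{p-1} = f \geq 0$, so $u_0$ is a nonnegative weak supersolution satisfying ${\rm div}\, A(\nabla u_0) \leq (-\lambda) u_0^{p-1}$. Since $g(s) = (-\lambda) s^{p-1}$ is nonnegative, nondecreasing, and obeys the Vázquez integral condition $\int_{0^+} (s\, g(s))^{-1/p}\,ds = +\infty$, the strong maximum principle for the uniformly elliptic operator $A$ would yield $u_0 > 0$ in $\Omega$; and since $\Omega$ has $C^{2,\alpha}$ boundary it enjoys the interior sphere condition, whence the corresponding Hopf boundary lemma gives $\frac{\partial u_0}{\partial \nu} < 0$ on $\partial \Omega$. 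The main obstacle is precisely verifying the structural hypotheses under which the Vázquez maximum principle and the Hopf lemma are available in this \emph{anisotropic} setting: uniform ellipticity, strict monotonicity and $C^1$ regularity of the frozen field $A$, together with smoothness and uniform convexity of the norm $H_{u_0}$ away from the origin. All of these reduce to the uniform positive lower bound for $\Psi_\xi(u_0)$ supplied by (iii) of Proposition \ref{P1} (which is exactly what fails if one drops $u_0 \neq 0$), while the genuine dependence of $A$ on the \emph{direction} of $\zeta$, rather than on $|\zeta|$ alone, is what blocks a direct appeal to the isotropic $p$-Laplacian theorems and makes the Finsler versions indispensable.
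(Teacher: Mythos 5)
Your proposal is correct and follows essentially the same route as the paper: freeze the norm $H_{u_0}$ and use (iii) of Proposition \ref{P1} (plus Cauchy--Schwarz) for two-sided ellipticity bounds on the resulting local quasilinear field, test with the negative part to get $u_0 \geq 0$, obtain $C^1(\overline{\Omega})$ regularity via Proposition \ref{P2}, and conclude with the strong maximum principle and Hopf lemma for the frozen operator. The only cosmetic difference is the citation: the paper verifies Tolksdorf's structure conditions (a)--(b) through the Hessian computations of \cite{HJM5} and quotes \cite{To} (see also \cite{PS}), while you invoke the equivalent V\'azquez-type integral condition on $g(s) = (-\lambda)s^{p-1}$ from the same Pucci--Serrin theory.
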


\begin{proof}
Note that the assumption $f \not\equiv 0$ in $\Omega$ implies $u_0 \not\equiv 0$ in $\Omega$. We first assert that $u_0 \geq 0$ in $\Omega$. Taking the test function $u_0^- = \min\{u_0, 0\} \in W^{1,p}_0(\Omega)$ in the equation and using the assumptions of proposition and the relation $H_{u_0}^{p-1}(\zeta) \nabla H_{u_0}(\zeta) \cdot \zeta = H^p_{u_0}(\zeta)$, we get

\[
\int_\Omega H^p_{u_0}(\nabla u_0^-)\, dx \leq 0,
\]
and since $u_0 \not\equiv 0$, by Proposition \ref{P1}, $u_0^- = 0$ in $\Omega$, in other words, $u_0$ is nonnegative.

Consider now the operator ${\mathcal L}_{p, u_0} u := -{\rm div} \, {\cal H}(\nabla u)$ on $W_0^{1,p}(\Omega)$, where ${\cal H}(\zeta) = H_{u_0}^{p-1}(\zeta) \nabla H_{u_0}(\zeta)$ for $\zeta \in \R^n$. Since $u_0$ is nonzero, by Proposition \ref{P1} and Cauchy-Schwartz inequality, there are constants $c_0, C_0 > 0$ such that $c_0 \leq \Vert \nabla_\xi u_0 \Vert_{L^p(\Omega)} \leq C_0$ for every $\xi \in \s^{n-1}$. Thanks to these inequalities, the operator ${\mathcal L}_{p, u_0}$ is uniformly elliptic, that is, there exist constants $c_1, C_1 > 0$ such that

\begin{itemize}
\item[(a)] $\sum^n_{i,j=1} \frac{\partial {\cal H}_i(\zeta)}{\partial \zeta_j} \eta_i \eta_j \geq c_1 |\zeta|^{p-2} |\eta|^2$;
\item[(b)] $\sum^n_{i,j=1} \left| \frac{\partial {\cal H}_i(\zeta)}{\partial \zeta_j} \right| \leq C_1 |\zeta|^{p-2}$
\end{itemize}
for every $\zeta \in \R^n \setminus \{0\}$ and $\eta \in \R^n$, where ${\cal H}_i(\zeta)$ is the $i$-th component of ${\cal H}(\zeta)$. For the computations of (a) and (b), see pages 25 and 26 of \cite{HJM5}.

Since $f \in L^\infty(\Omega)$, by Proposition \ref{P2}, we know that $u_0 \in C^1(\overline{\Omega})$. Then, since $u_0 \geq 0$ in $\Omega$ and $u_0 \not\equiv 0$ in $\Omega$, evoking the strong maximum principle (Proposition 3.2.2 of \cite{To}) and Hopf's lemma (Proposition 3.2.1 of \cite{To}) for $C^1$ super-solutions of quasilinear elliptic equations for operators satisfying (a) and (b) (see also \cite{PS}), we derive the two desired statements.
\end{proof}

Finally, we shall need a Pohozaev type identity satisfied by weak solutions of \eqref{1.1}.

\begin{propo} \label{P4}
Let $\Omega$ be a bounded domain with $C^{2, \alpha}$ boundary, $f: \R \rightarrow \R$ be a continuous function such that $\vert f(t) \vert \leq b(\vert t \vert^{p^*-1} + 1)$ for all $t \in \R$, where $b > 0$ is a constant, and $F(t) = \int_0^t f(\theta)\, d\theta$. Then, the integral identity

\[
\left( \frac{1}{p} - 1 \right) \int_{\partial \Omega} H_{u_0}^p(\nabla u_0) (x \cdot \nu)\, d\sigma = \left( \frac{n}{p} - 1 \right) \int_\Omega u_0 f(u_0)\, dx - n \int_\Omega F(u_0)\, dx
\]
holds for any nontrivial weak solution $u_0 \in W^{1,p}_0(\Omega)$ of

\begin{equation} \label{1.3}
\left\{
\begin{array}{rlllr}
\Delta^{\cal A}_p u &=& f(u) & {\rm in} & \Omega, \\
u&=&0 & {\rm on} & \partial \Omega.
\end{array}\right.
\end{equation}
\end{propo}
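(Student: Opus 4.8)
The plan is to exploit the fact that, once the solution $u_0$ is fixed, the nonlocal affine operator $\Delta_p^{\mathcal A}$ freezes into an ordinary anisotropic $p$-Laplacian, so that the classical dilation (Pohozaev) argument applies. Concretely, I would set $G(\zeta) := H_{u_0}^p(\zeta)$ and recall that, by its definition, $G$ is a \emph{fixed}, convex, $p$-homogeneous function of $\zeta \in \R^n$, smooth away from the origin, and that $\mathcal H(\zeta) := H_{u_0}^{p-1}(\zeta)\nabla H_{u_0}(\zeta) = \frac1p \nabla_\zeta G(\zeta)$. With this notation \eqref{1.3} reads $-\mathrm{div}\,\mathcal H(\nabla u_0) = f(u_0)$ in the weak sense of \eqref{1.2}, a standard quasilinear equation whose nonlocal dependence on $u_0$ has been entirely absorbed into the fixed coefficient $G$; in particular the relation $\mathcal H(\nabla u_0)\cdot\nabla u_0 = H_{u_0}^p(\nabla u_0) = G(\nabla u_0)$ of Proposition \ref{P3} and Euler's identity for $G$ become the main algebraic tools.

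First I would record the regularity needed. Since $f$ obeys the growth bound of the hypothesis, Proposition \ref{P2} gives $u_0 \in C^{1,\alpha}(\overline{\Omega})$, and the uniform ellipticity (a)--(b) of Proposition \ref{P3} for $\mathcal H$ allows one to upgrade this to the interior $W^{2,2}_{\mathrm{loc}}$ regularity that legitimizes the manipulations below, exactly as in the $p$-Laplace references quoted before the statement. The core computation is the scaling identity obtained by testing the equation against the dilation field $x\cdot\nabla u_0$. On the left one integrates by parts twice: using $\nabla(x\cdot\nabla u_0) = \nabla u_0 + (x\cdot\nabla)\nabla u_0$, the homogeneity relation $\mathcal H(\nabla u_0)\cdot\nabla u_0 = G(\nabla u_0)$, and the chain rule $\mathcal H(\nabla u_0)\cdot(x\cdot\nabla)\nabla u_0 = \frac1p\, x\cdot\nabla\bigl(G(\nabla u_0)\bigr)$, one arrives at a volume term $(1-\frac np)\int_\Omega G(\nabla u_0)\,dx$ together with boundary contributions. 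On the right, writing $f(u_0)(x\cdot\nabla u_0) = x\cdot\nabla F(u_0)$ and applying the divergence theorem produces $-n\int_\Omega F(u_0)\,dx$, the boundary term vanishing since $u_0 \equiv 0$ on $\partial\Omega$.

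Next I would treat the boundary terms, again using $u_0 = 0$ on $\partial\Omega$, which forces $\nabla u_0 = (\partial u_0/\partial\nu)\,\nu$ there; combined with Euler's relation for the $p$-homogeneous $G$, this collapses all boundary integrals into $(\frac1p-1)\int_{\partial\Omega} H_{u_0}^p(\nabla u_0)(x\cdot\nu)\,d\sigma$. Finally, testing \eqref{1.3} with $u_0$ itself yields $\int_\Omega G(\nabla u_0)\,dx = \int_\Omega u_0 f(u_0)\,dx$; substituting this for the volume term and rearranging gives precisely the asserted identity. The main obstacle is not the algebra but the analytic justification of using $x\cdot\nabla u_0$ as a test function and of the second integration by parts for a solution that is only $C^{1,\alpha}$ rather than $C^2$. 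I expect to handle this by the usual device of working on interior subdomains $\Omega_\delta \Subset \Omega$ with a cutoff, and passing to the limit $\delta \to 0$, relying on the interior $W^{2,2}_{\mathrm{loc}}$ estimate together with the uniform ellipticity of the frozen operator $\mathcal H$, as is done for the $p$-Laplacian in the cited works.
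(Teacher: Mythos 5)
Your reduction is the same one the paper uses: freeze the coefficient $H_{u_0}$, so that \eqref{1.3} becomes the quasilinear equation $-\mathrm{div}\,{\mathcal H}(\nabla u_0) = f(u_0)$ with ${\mathcal H}(\zeta) = H_{u_0}^{p-1}(\zeta)\nabla H_{u_0}(\zeta) = \frac1p \nabla_\zeta H^p_{u_0}(\zeta)$, and your algebra (Euler's identity for the $p$-homogeneous $G$, the collapse of the boundary terms via $\nabla u_0 = (\partial u_0/\partial\nu)\,\nu$ on $\partial\Omega$, the computation $\int_\Omega (x\cdot\nabla u_0)f(u_0)\,dx = -n\int_\Omega F(u_0)\,dx$, and the substitution $\int_\Omega G(\nabla u_0)\,dx = \int_\Omega u_0 f(u_0)\,dx$ from testing with $u_0$) reproduces the identity correctly. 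The gap is in the step you yourself flag as the main obstacle: the claim that the ellipticity bounds (a)--(b) of Proposition \ref{P3} ``upgrade'' the $C^{1,\alpha}$ regularity of Proposition \ref{P2} to interior $W^{2,2}_{\mathrm{loc}}$ regularity. Those bounds are \emph{not} uniform ellipticity: the modulus degenerates like $|\zeta|^{p-2}$, vanishing (for $p>2$) or blowing up (for $p<2$) at critical points of $u_0$. For such degenerate operators the difference-quotient method only yields weighted second-order information of the type $|\nabla u_0|^{(p-2)/2}\nabla u_0 \in W^{1,2}_{\mathrm{loc}}$, and full $W^{2,2}_{\mathrm{loc}}$ regularity is in general unavailable for arbitrary $p \in (1,n)$. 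This is precisely the known obstruction for the $p$-Laplace Pohozaev identity, and it is why the cited works do not argue as you suggest: Guedda--V\'eron \cite{GV} proceed by regularizing the operator, and Degiovanni--Musesti--Squassina \cite{DMS} wrote their paper exactly to establish the Pucci--Serrin variational identity for solutions that are merely $C^1$, without any second-order regularity hypothesis.

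The paper closes this gap by citation rather than by regularity theory: after observing that $\zeta \mapsto H^p_{u_0}(\zeta)$ is strictly convex (the hypothesis needed in \cite{DMS}), it applies Theorems 1 and 2 of \cite{DMS} to the frozen equation with the choices $h(x)=x$ and $a(x)=a$ constant, and then performs the same algebraic simplifications you carried out. So your proof becomes complete if you replace the unsupported $W^{2,2}_{\mathrm{loc}}$ step either by invoking the Degiovanni--Musesti--Squassina identity for $C^1$ solutions (checking strict convexity of $H^p_{u_0}$), or by a genuine approximation argument \`a la \cite{GV}; as written, the justification of testing with $x\cdot\nabla u_0$ and of the second integration by parts does not go through for $p \neq 2$.
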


\begin{proof}
Let $u_0 \in W^{1,p}_0(\Omega)$ be a nontrivial weak solution of \eqref{1.3}. By Proposition \ref{P2}, we know that $u_0 \in C^1(\overline{\Omega})$. Since $\zeta \in \R^n \mapsto H^p_{u_0}(\zeta)$ is strictly convex, we can apply the Pohozaev identity established by Degiovanni, Musesti and Squassina for $C^1$ solutions of

\begin{equation} \label{1.4}
\left\{
\begin{array}{rlllr}
-{\rm div} \left( {\cal H}(\nabla u) \right) &=& f(u) & {\rm in} & \Omega, \\
u&=&0 & {\rm on} & \partial \Omega,
\end{array}\right.
\end{equation}
where ${\cal H}(\zeta) = H_{u_0}^{p-1}(\zeta) \nabla H_{u_0}(\zeta)$. Precisely, evoking Theorems 1 and 2 of \cite{DMS} with the choice $h(x) = x$ and $a(x) = a$, where $a$ is an arbitrary constant, we have

\begin{eqnarray*}
&& \int_{\partial \Omega} \left( \frac{1}{p} H_{u_0}^p(\nabla u_0) -  H_{u_0}^{p-1}(\nabla u_0) \nabla H_{u_0}(\nabla u_0) \cdot \nabla u_0 \right)(x \cdot \nu)\, d\sigma\\
&& = \frac{n}{p} \int_\Omega H_{u_0}^p(\nabla u_0)\, dx - \int_\Omega H_{u_0}^{p-1}(\nabla u_0) \nabla H_{u_0}(\nabla u_0) \cdot \nabla u_0\, dx\\
&&\ \ \ - a \int_\Omega H_{u_0}^{p-1}(\nabla u_0) \nabla H_{u_0}(\nabla u_0) \cdot \nabla u_0\, dx + \int_\Omega \left( x \cdot \nabla u_0 + a u_0 \right) f(u_0)\, dx.
\end{eqnarray*}
Using the relation $H_{u_0}^{p-1}(\zeta) \nabla H_{u_0}(\zeta) \cdot \zeta = H^p_{u_0}(\zeta)$ (which comes from the $1$-homogeneity of $H_{u_0}(\zeta)$) and that $u_0$ solves \eqref{1.4}, the above identity can be placed into the simpler form

\begin{eqnarray*}
\left( \frac{1}{p} - 1 \right) \int_{\partial \Omega} H_{u_0}^p(\nabla u_0) (x \cdot \nu)\, d\sigma &=& \left( \frac{n}{p} - 1 - a \right) \int_\Omega H_{u_0}^p(\nabla u_0)\, dx + \int_\Omega \left( x \cdot \nabla u_0 + a u_0 \right) f(u_0)\, dx\\
&=& \left( \frac{n}{p} - 1 \right) \int_\Omega u_0 f(u_0)\, dx + \int_\Omega \left( x \cdot \nabla u_0 \right) f(u_0)\, dx.
\end{eqnarray*}
On the other hand, the divergence theorem and the condition $u_0 = 0$ on $\partial \Omega$ lead to

\begin{eqnarray*}
\int_\Omega \left( x \cdot \nabla u_0 \right) f(u_0)\, dx &=& \frac{1}{2} \int_\Omega \nabla( \vert x \vert^2) \cdot \nabla F(u_0)\, dx \\
&=& - \frac{1}{2} \int_\Omega \Delta( \vert x \vert^2) F(u_0)\, dx + \int_{\partial \Omega} F(u_0) (x \cdot \nu)\, d\sigma \\
&=& -n \int_\Omega F(u_0)\, dx.
\end{eqnarray*}
Replacing this equality in the previous one, one gets the wished identity for weak solutions of \eqref{1.3}.
\end{proof}

\section{Proof of existence theorems}

We prove Theorems \ref{T1}, \ref{T2} and \ref{T3} by applying the direct method to the functional $\Phi_{\cal A}$ constrained to the set $X$. According to Section 2, it suffices to show that the least energy level $c_{\cal A} = \inf\limits_{u \in X} \Phi_{\cal A}(u)$ is positive and is achieved for some positive $C^1$ function in $X$.

We begin with the subcritical case, whose main ingredients are Theorems \ref{T6} and \ref{T7}, Corollary \ref{C2} and Propositions \ref{P2} and \ref{P3}.

\begin{proof}[Proof of Theorem \ref{T1}]
Let $u_k$ be a minimizing sequence of $\Phi_{\cal A}$ in $X$. Since $q > p$, by Hölder's inequality, $u_k$ is bounded in $L^p(\Omega)$, so the affine energy ${\cal E}_{p, \Omega} (u_k)$ is bounded too. Then, by Theorem \ref{T7}, there exists $u_0 \in W^{1,p}_0(\Omega)$ such that $u_k \rightarrow u_0$ strongly in $L^p(\Omega)$ and in $L^q(\Omega)$ once $q < p^*$. In particular, $u_0 \in X$ and thus, by Corollary \ref{C2}, $u_k$ is bounded in $W^{1,p}_0(\Omega)$. Passing to a subsequence, if necessary, one may assume that $u_k \rightharpoonup u_0$ weakly in $W^{1,p}_0(\Omega)$. Then, by Theorem \ref{T6}, we derive
\[
\Phi_{\cal A}(u_0) \leq \liminf_{k \rightarrow \infty} \Phi_{\cal A}(u_k) = c_{{\cal A}},
\]
and thus $u_0$ minimizes $\Phi_{\cal A}$ in $X$. Moreover, we can assume that $u_0$ is nonnegative, since $\vert u_0 \vert \in X$ and $\Phi_{\cal A}(\vert u_0 \vert) = \Phi_{\cal A}(u_0)$.

Using now the assumption $\lambda < \lambda^{\cal A}_{1,p}$ and the sharp affine $L^p$ Poincaré inequality (Theorem 4 of \cite{HJM5}), we get
\[
c_{\cal A} = \Phi_{\cal A}(u_0) = {\cal E}^p_{p,\Omega}(u_0) - \lambda \int_\Omega \vert u_0 \vert^p\, dx \geq \left( \lambda^{\cal A}_{1,p} - \lambda \right) \int_\Omega \vert u_0 \vert^p\, dx > 0.
\]
Therefore, $u_0$ is a nontrivial nonnegative weak solution of \eqref{EL}. Finally, by Propositions \ref{P2} and \ref{P3}, $u_0$ is a positive $C^1$ solution of \eqref{EL}.
\end{proof}

The existence of positive $C^1$ solutions to critical problems requires three more results. The first one considers the truncation for $h > 0$:

\[
T_h(s) = \min(\max(s,-h), h)\ \text{ and }\ R_h(s) = s - T_h(s).
\]
A simple computation gives $\Vert \nabla u \Vert^p_{L^p(\Omega)} = \Vert \nabla T_h u \Vert^p_{L^p(\Omega)} + \Vert \nabla R_h u \Vert^p_{L^p(\Omega)}$ for every $u \in W^{1,p}_0(\Omega)$. Unfortunately, the equality is not valid within the affine setting, but it is still possible to guarantee an inequality thanks to (iii) of Proposition \ref{P1}.

\begin{propo}\label{P5}
For any $u \in W^{1,p}_0(\Omega)$, we have
\[
{\cal E}^p_{p, \Omega}(u) \geq {\cal E}^p_{p, \Omega}(T_h u) + {\cal E}^p_{p, \Omega}(R_h u).
\]
\end{propo}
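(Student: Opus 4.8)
The energy $\mathcal{E}_{p,\Omega}^p$ is built from the directional integrals $\Psi_\xi(u) = \int_\Omega |\nabla_\xi u|^p\,dx$, so I would first reduce the claimed inequality to a statement purely about these $\Psi_\xi$. Let me analyze the structure.

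The plan is to reduce the claim to a reverse Minkowski inequality for the directional integrals $\Psi_\xi$. First I would record the exact form of the energy, namely
\[
{\cal E}^p_{p,\Omega}(u) = \alpha_{n,p}^p \left( \int_{\s^{n-1}} \Psi_\xi(u)^{-\frac np}\, d\sigma(\xi) \right)^{-\frac pn},
\]
so that $u \mapsto {\cal E}^p_{p,\Omega}(u)$ is, up to the constant $\alpha_{n,p}^p$, the functional $N(f) = \left( \int_{\s^{n-1}} f^{-n/p}\, d\sigma \right)^{-p/n}$ evaluated at the spherical function $f = \Psi_{\cdot}(u)$. The elementary observation that makes the truncation tractable is that $\nabla T_h u$ and $\nabla R_h u$ have (a.e.) disjoint supports: where $|u| < h$ one has $\nabla R_h u = 0$ and $\nabla T_h u = \nabla u$, where $|u| > h$ the roles reverse, and $\nabla u = 0$ a.e. on the level set $\{|u| = h\}$. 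Hence $|\nabla_\xi u|^p = |\nabla_\xi T_h u|^p + |\nabla_\xi R_h u|^p$ pointwise a.e., and integrating over $\Omega$ gives the exact additivity $\Psi_\xi(u) = \Psi_\xi(T_h u) + \Psi_\xi(R_h u)$ for every $\xi \in \s^{n-1}$. The proposition then follows once I establish that $N$ is superadditive, i.e. $N(a+b) \ge N(a) + N(b)$ with $a = \Psi_{\cdot}(T_h u)$ and $b = \Psi_{\cdot}(R_h u)$.

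This superadditivity is precisely the reverse Minkowski inequality for the negative exponent $s := -n/p < 0$, where $N(f) = \Vert f \Vert_s$ and $\Vert f \Vert_s = (\int f^s\, d\sigma)^{1/s}$. I would prove it by mimicking the classical Minkowski argument with the reverse H\"older inequality in place of the usual one. Writing
\[
\Vert a+b \Vert_s^s = \int_{\s^{n-1}} (a+b)^{s-1} a\, d\sigma + \int_{\s^{n-1}} (a+b)^{s-1} b\, d\sigma,
\]
I would apply to each term the reverse H\"older inequality with the conjugate pair $(s,q)$, $1/s + 1/q = 1$; since $s < 0$, its conjugate satisfies $q \in (0,1)$, so H\"older reverses and yields $\int (a+b)^{s-1} a\, d\sigma \ge \Vert a \Vert_s\, \Vert (a+b)^{s-1} \Vert_q$. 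Because $(s-1)q = s$, one computes $\Vert (a+b)^{s-1} \Vert_q = \Vert a+b \Vert_s^{s-1}$, and summing the two estimates gives $\Vert a+b \Vert_s^s \ge \Vert a+b \Vert_s^{s-1}(\Vert a \Vert_s + \Vert b \Vert_s)$. Dividing by the positive quantity $\Vert a+b \Vert_s^{s-1}$ produces $\Vert a+b \Vert_s \ge \Vert a \Vert_s + \Vert b \Vert_s$, and multiplying through by $\alpha_{n,p}^p$ returns the stated inequality for ${\cal E}^p_{p,\Omega}$.

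The delicate point, and where (iii) of Proposition \ref{P1} enters decisively, is the legitimacy of the negative powers. The reverse H\"older step requires $a(\xi)$ and $b(\xi)$ to be strictly positive for \emph{every} $\xi$, not merely almost everywhere; otherwise the integrals and norms could be infinite or the final division invalid. Proposition \ref{P1}(iii) supplies exactly the needed dichotomy: if $T_h u \neq 0$ then $\Psi_\xi(T_h u) > 0$ for all $\xi$, and likewise for $R_h u$, so no partial degeneracy (some directions vanishing while others do not) can occur. Together with the continuity of $\xi \mapsto \Psi_\xi$ and the compactness of $\s^{n-1}$, this yields a uniform positive lower bound and hence finiteness of every integral involved. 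The remaining trivial cases $T_h u = 0$ or $R_h u = 0$ are treated separately: the corresponding energy then vanishes by Proposition \ref{P1} and the asserted inequality collapses to an equality. I expect this bookkeeping of degeneracies, together with pinning down the correct direction of the reverse H\"older inequality, to be the only genuine subtlety; everything else is the routine Minkowski computation.
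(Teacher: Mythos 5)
Your proposal is correct and follows essentially the same route as the paper: the exact additivity $\Psi_\xi(u) = \Psi_\xi(T_h u) + \Psi_\xi(R_h u)$, the trivial cases dispatched via Proposition \ref{P1}, and then the reverse Minkowski inequality for the exponent $-n/p$ applied on $\s^{n-1}$, with (iii) of Proposition \ref{P1} guaranteeing the strict positivity needed for the negative powers. The only difference is that you prove the reverse Minkowski inequality from scratch via reverse H\"{o}lder (correctly, with $q = s/(s-1) \in (0,1)$ and $(s-1)q = s$), whereas the paper simply invokes it as known.
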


\begin{proof} From the definition of $T_h(s)$, we have $T_h u, R_h u \in W_0^{1,p}(\Omega)$ and $\Psi_\xi(u) = \Psi_\xi(T_h u) + \Psi_\xi(R_h u)$ for all $\xi \in \s^{n-1}$. Note that the decomposition implies ${\cal E}_{p, \Omega}(u) \geq {\cal E}_{p, \Omega}(T_h u)$ and ${\cal E}_{p, \Omega}(u) \geq {\cal E}_{p, \Omega}(R_h u)$. So, the statement follows in the case that ${\cal E}_{p, \Omega}(T_h u) = 0$ or ${\cal E}_{p, \Omega}(R_h u) = 0$.

Assume now that ${\cal E}_{p, \Omega}(T_h u)$ and ${\cal E}_{p, \Omega}(R_h u)$ are nonzero. By (iii) of Proposition \ref{P1}, we have $\Psi_\xi(T_h u)$, $\Psi_\xi(R_h u) > 0$ for all $\xi \in \s^{n-1}$. Then, by the reverse Minkowski inequality for negative exponents, we get
\begin{eqnarray*}
{\cal E}^p_{p, \Omega}(u) &=& \alpha_{n,p}^p \left( \int_{\s^{n-1}} \left( \Psi_\xi(T_h u) + \Psi_\xi(R_h u) \right)^{-\frac np} d\sigma(\xi)\right)^{-\frac pn} \\
&\geq& \alpha_{n,p}^p \left( \int_{\s^{n-1}} \left( \Psi_\xi(T_h u) \right)^{-\frac np}\, d\sigma(\xi)\right)^{-\frac pn} + \alpha_{n,p}^p \left( \int_{\s^{n-1}} \left( \Psi_\xi(R_h u) \right)^{-\frac np}\, d\sigma(\xi)\right)^{-\frac pn} \\
&=& {\cal E}^p_{p, \Omega}(T_h u) + {\cal E}^p_{p, \Omega}(R_h u)
\end{eqnarray*}
for every $u \in W^{1,p}_0(\Omega)$.
\end{proof}

\begin{propo}\label{P6}
Let $q = p^*$ and assume that $0 < c_{\cal A} < K_{n,p}^{-p}$, where $K_{n,p}$ is the best constant for the sharp affine $L^p$ Sobolev inequality \eqref{ASob}. Then, $\Phi_{\cal A}$ admits a minimizer $u_0$ in $X$.
\end{propo}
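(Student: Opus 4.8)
The plan is to run the direct method on a minimizing sequence and to recover compactness at the critical exponent by isolating the concentrating mass through the truncations $T_h, R_h$ together with Proposition \ref{P5}; the strict bound $c_{\cal A}<K_{n,p}^{-p}$ will enter exactly twice, once to prevent vanishing and once to rule out a genuine loss of mass.

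First I would fix a minimizing sequence $u_k\in X$, which may be taken nonnegative since ${\cal E}_{p,\Omega}(|u_k|)={\cal E}_{p,\Omega}(u_k)$. Because $q=p^*$, H\"older's inequality gives $\int_\Omega u_k^p\,dx\le|\Omega|^{p/n}$, so ${\cal E}^p_{p,\Omega}(u_k)=\Phi_{\cal A}(u_k)+\lambda\int_\Omega u_k^p\,dx$ is bounded and Theorem \ref{T7} produces $u_0\in W^{1,p}_0(\Omega)$ with $u_k\to u_0$ in $L^p(\Omega)$ and a.e. (along a subsequence). The first use of the hypothesis is to show $u_0\neq0$: otherwise $\int_\Omega u_k^p\,dx\to0$, hence ${\cal E}^p_{p,\Omega}(u_k)\to c_{\cal A}$, while the sharp affine Sobolev inequality \eqref{ASob} (equivalently $\mu^{\cal A}_{p,p^*}=K_{n,p}^{-1}$) forces ${\cal E}^p_{p,\Omega}(u_k)\ge K_{n,p}^{-p}\|u_k\|_{L^{p^*}}^p=K_{n,p}^{-p}$, giving $c_{\cal A}\ge K_{n,p}^{-p}$, a contradiction. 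With $u_0\neq0$ in hand, Corollary \ref{C2} makes $u_k$ bounded in $W^{1,p}_0(\Omega)$, so after a further subsequence $u_k\rightharpoonup u_0$ weakly in $W^{1,p}_0(\Omega)$ and $\int_\Omega u_k^p\,dx\to\int_\Omega u_0^p\,dx$.

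The core is a lower bound for the affine energy that separates $u_0$ from the escaping mass. Set $b=\lim_k\|u_k-u_0\|_{L^{p^*}}^{p^*}$ and $a=\|u_0\|_{L^{p^*}}^{p^*}$, so that $a+b=1$ by the Brezis--Lieb lemma and $a>0$. For fixed $h>0$, Proposition \ref{P5} gives ${\cal E}^p_{p,\Omega}(u_k)\ge{\cal E}^p_{p,\Omega}(T_hu_k)+{\cal E}^p_{p,\Omega}(R_hu_k)$. Since $T_hu_k\rightharpoonup T_hu_0$ in $W^{1,p}_0(\Omega)$ (the truncation is $1$-Lipschitz, so $T_hu_k$ is bounded, and $T_hu_k\to T_hu_0$ in $L^p(\Omega)$), Theorem \ref{T6} yields $\liminf_k{\cal E}^p_{p,\Omega}(T_hu_k)\ge{\cal E}^p_{p,\Omega}(T_hu_0)$; on the tail I would apply \eqref{ASob} to $R_hu_k$ (extended by zero to $\R^n$) to get ${\cal E}^p_{p,\Omega}(R_hu_k)\ge K_{n,p}^{-p}\|R_hu_k\|_{L^{p^*}}^p$. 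The decisive computation is that the tail carries exactly the escaping mass: using $u_k\to u_0$ a.e., $\sup_k\|u_k\|_{L^{p^*}}<\infty$ and the Vitali convergence theorem applied to the uniformly integrable family $u_k^{p^*}-(u_k-h)_+^{p^*}$, one obtains $\lim_k\|R_hu_k\|_{L^{p^*}}^{p^*}=b+\|R_hu_0\|_{L^{p^*}}^{p^*}$. Letting $h\to\infty$, continuity of ${\cal E}_{p,\Omega}$ along $T_hu_0\to u_0$ in $W^{1,p}_0(\Omega)$ (valid because $\Psi_\xi(u_0)$ is bounded away from $0$ uniformly in $\xi$ by Proposition \ref{P1}) together with $\|R_hu_0\|_{L^{p^*}}\to0$ gives
\[
\lim_k{\cal E}^p_{p,\Omega}(u_k)\ge{\cal E}^p_{p,\Omega}(u_0)+K_{n,p}^{-p}\,b^{p/p^*}.
\]

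Finally I would combine this with $\int_\Omega u_k^p\,dx\to\int_\Omega u_0^p\,dx$ and the $p$-homogeneity bound $\Phi_{\cal A}(u_0)\ge c_{\cal A}\|u_0\|_{L^{p^*}}^p=c_{\cal A}a^{p/p^*}$ to obtain $c_{\cal A}\ge c_{\cal A}a^{p/p^*}+K_{n,p}^{-p}b^{p/p^*}$. If $b>0$, then since $K_{n,p}^{-p}>c_{\cal A}>0$ and $a^{p/p^*}+b^{p/p^*}\ge a+b=1$ (as $p/p^*<1$), the right-hand side strictly exceeds $c_{\cal A}$, a contradiction; hence $b=0$, i.e. $\|u_0\|_{L^{p^*}}=1$ and $u_0\in X$. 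Then lower semicontinuity of ${\cal E}_{p,\Omega}$ and the $L^p$ convergence give $\Phi_{\cal A}(u_0)\le\liminf_k\Phi_{\cal A}(u_k)=c_{\cal A}$, so $u_0$ is the desired minimizer. I expect the main obstacle to be precisely the tail accounting $\lim_k\|R_hu_k\|_{L^{p^*}}^{p^*}=b+\|R_hu_0\|_{L^{p^*}}^{p^*}$: this is where the failure of $L^{p^*}$-compactness is confronted and where the constant $K_{n,p}^{-p}$ must be extracted sharply so that the threshold $c_{\cal A}<K_{n,p}^{-p}$ can be exploited, the non-integral and nonconvex nature of ${\cal E}_{p,\Omega}$ being exactly what forces one to rely on the inequality of Proposition \ref{P5} rather than on an exact splitting identity.
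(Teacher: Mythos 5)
Your proof is correct, and it follows the paper's skeleton exactly up to the point where $u_0\in X$ must be established: the nonvanishing step (affine Sobolev inequality plus $c_{\cal A}<K_{n,p}^{-p}$ forces $u_0\neq 0$), the passage through Corollary \ref{C2} and Theorem \ref{T6}, and the truncation splitting via Proposition \ref{P5} all coincide with the paper's argument. The endgame is genuinely different in execution. The paper bounds \emph{both} truncated pieces from below by the level, $\Phi_{\cal A}(T_h u_k)+\Phi_{\cal A}(R_h u_k)\ge c_{\cal A}\left(\Vert T_h u_k\Vert^p_{L^{p^*}(\Omega)}+\Vert R_h u_k\Vert^p_{L^{p^*}(\Omega)}\right)$, then cites Lemma 3.1 of \cite{BW} as a black box to pass to the limit and obtains $1\ge a^{p/p^*}+(1-a)^{p/p^*}$ with $a=\Vert u_0\Vert^{p^*}_{L^{p^*}(\Omega)}$; at that stage it needs only $c_{\cal A}>0$ and strict superadditivity of $t\mapsto t^{p/p^*}$, so the threshold hypothesis enters exactly once. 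You instead keep Theorem \ref{T6} on $T_h u_k$, apply the sharp inequality \eqref{ASob} with constant $K_{n,p}^{-p}$ on the tail $R_h u_k$, and establish the tail accounting $\lim_k\Vert R_h u_k\Vert^{p^*}_{L^{p^*}(\Omega)}=b+\Vert R_h u_0\Vert^{p^*}_{L^{p^*}(\Omega)}$ by hand via Vitali and Brezis--Lieb; note that this computation (uniform integrability following from $0\le s^{p^*}-(s-h)_+^{p^*}\le h^{p^*}+p^*h\,s^{p^*-1}$ and the boundedness of $u_k^{p^*-1}$ in $L^{p^*/(p^*-1)}(\Omega)$) is in substance a self-contained reproof of the cited Bartsch--Willem lemma. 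What your variant buys is independence from the reference and the quantitative concentration inequality $\lim_k{\cal E}^p_{p,\Omega}(u_k)\ge{\cal E}^p_{p,\Omega}(u_0)+K_{n,p}^{-p}\,b^{p/p^*}$, which is slightly stronger than what the paper extracts; the price is that $c_{\cal A}<K_{n,p}^{-p}$ is invoked a second time, where the paper's routing of $c_{\cal A}$ through both pieces lets it finish with positivity of $c_{\cal A}$ alone. Two details to make explicit in a polished writeup: pass to a further subsequence so that $b=\lim_k\Vert u_k-u_0\Vert^{p^*}_{L^{p^*}(\Omega)}$ exists, and justify, as you correctly indicate, the continuity ${\cal E}_{p,\Omega}(T_h u_0)\to{\cal E}_{p,\Omega}(u_0)$ as $h\to\infty$ by the strong convergence $T_h u_0\to u_0$ in $W^{1,p}_0(\Omega)$ combined with the uniform positive lower bound on $\xi\mapsto\Psi_\xi(u_0)$ furnished by Proposition \ref{P1}.
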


\begin{proof}
Let $u_k$ be a minimizing sequence of $\Phi_{\cal A}$ in $X$. Proceeding as in the proof of Theorem \ref{T1}, by Theorem \ref{T7}, $u_k \rightarrow u_0$ strongly in $L^p(\Omega)$, module a subsequence. One may also assume that $u_k \rightarrow u_0$ almost everywhere in $\Omega$ and $T_h u_k \rightharpoonup T_h u_0$ weakly in $L^{p^*}(\Omega)$.

Using the affine Sobolev inequality on $W^{1,p}_0(\Omega)$,
\[
K_{n,p}^{-p} \left( \int_{\Omega} |u|^{p^*}\, dx\right)^{\frac{p}{p^*}} \leq {\cal E}^p_{p, \Omega}(u),
\]
we get
\[
c_{\cal A} = \lim_{k \rightarrow \infty}\left( {\cal E}^p_{p, \Omega}(u_k) - \lambda \int_{\Omega} |u_k|^p\, dx \right) \geq  K_{n,p}^{-p} - \lambda \int_{\Omega} |u_0|^p\, dx,
\]
so the condition $c_{\cal A} < K_{n,p}^{-p}$ implies that $u_0 \neq 0$. Hence, by Corollary \ref{C2} and Theorem \ref{T6}, we have $u_k \rightharpoonup u_0$ weakly in $W^{1,p}_0(\Omega)$ and $\Phi_{\cal A}(u_0) \leq c_{\cal A}$. It only remains to show that $u_0 \in X$.

Evoking Proposition \ref{P5}, we easily deduce that

\begin{eqnarray*}
c_{\cal A} &=& \lim_{k \rightarrow \infty} \Phi_{\cal A}(u_k) \\
&\geq & \lim_{k \rightarrow \infty} \left( \Phi_{\cal A}(T_h u_k) + \Phi_{\cal A}(R_h u_k) \right)\\
&\geq & c_{\cal A}\lim_{k \rightarrow \infty}\left( \Vert T_h u_k\Vert^p_{L^{p^*}(\Omega)} + \Vert R_h u_k\Vert^p_{L^{p^*}(\Omega)} \right).
\end{eqnarray*}
Applying now Lemma 3.1 of \cite{BW}, we derive

\begin{eqnarray*}
c_{\cal A} &\geq & c_{\cal A}\left[ \Vert T_h u_0 \Vert^p_{L^{p^*}(\Omega)} + \left( 1 + \Vert R_h u_0 \Vert_{L^{p^*}(\Omega)}^{p^*} - \Vert u_0 \Vert_{L^{p^*}(\Omega)}^{p^*} \right)^{\frac{p}{p^*}} \right].
\end{eqnarray*}
Using the condition $c_{\cal A} > 0$ and letting $h \rightarrow \infty$, we obtain

\[
1\geq \left( \Vert u_0 \Vert_{L^{p^*}(\Omega)}^{p^*} \right)^{\frac{p}{p^*}} + \left( 1 - \Vert u_0 \Vert_{L^{p^*}(\Omega)}^{p^*}\right)^{\frac{p}{p^*}},
\]
and thus $u_0 \in X$ because $u_0 \neq 0$.
\end{proof}

\begin{propo}\label{P7}
The equality ${\cal E}_p(u) = \Vert \nabla u \Vert_{L^p(\R^n)}$ is valid for every radial function $u \in {\cal D}^{1,p}(\R^n)$.
\end{propo}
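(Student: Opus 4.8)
The plan is to exploit radial symmetry to reduce both sides to a single radial integral times an explicit constant, and then to reconcile that constant with the normalization $\alpha_{n,p}$. Write $u(x) = v(|x|)$, so that $\nabla u(x) = v'(|x|)\,x/|x|$ and hence the directional derivative is $\nabla_\xi u(x) = v'(|x|)\,(x\cdot\xi)/|x|$ for every $\xi \in \s^{n-1}$. First I would pass to polar coordinates $x = r\theta$, $r = |x|$, $\theta \in \s^{n-1}$; the $p$-homogeneity cancels the factor $|x|^p$ and gives
\[
\int_{\R^n} |\nabla_\xi u(x)|^p\, dx = \left( \int_0^\infty |v'(r)|^p r^{n-1}\, dr \right) \int_{\s^{n-1}} |\theta \cdot \xi|^p\, d\sigma(\theta).
\]

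The crucial observation is that, by rotational invariance of the surface measure, the inner integral $c_{n,p} := \int_{\s^{n-1}} |\theta \cdot \xi|^p\, d\sigma(\theta)$ is independent of $\xi \in \s^{n-1}$. Setting $I := \int_0^\infty |v'(r)|^p r^{n-1}\, dr$, the map $\xi \mapsto \int_{\R^n}|\nabla_\xi u|^p\,dx = c_{n,p}\,I$ is therefore constant on $\s^{n-1}$. (If $I = 0$ then $u$ is constant, hence $u = 0$ in ${\cal D}^{1,p}(\R^n)$ and both sides vanish, so assume $I > 0$.) Consequently the outer sphere integral defining ${\cal E}_p(u)$ collapses to $(c_{n,p}I)^{-n/p}\,n\omega_n$, yielding
\[
{\cal E}_p(u) = \alpha_{n,p}\,(c_{n,p}\,I)^{1/p}\,(n\omega_n)^{-1/n}.
\]
On the other hand, polar coordinates give directly $\Vert \nabla u \Vert_{L^p(\R^n)}^p = n\omega_n\,I$. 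Comparing the two, the asserted equality is equivalent to the purely numerical identity $\alpha_{n,p}^p\, c_{n,p} = (n\omega_n)^{1 + p/n}$, which, upon inserting the definition of $\alpha_{n,p}$, simplifies to $c_{n,p} = 2\,\omega_{n+p-2}/\omega_{p-1}$.

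The only remaining step is to evaluate $c_{n,p}$ and match it with this target. Taking $\xi = e_n$ and using the slicing $\theta = (\sin\phi\,\omega, \cos\phi)$ with $\omega \in \s^{n-2}$, under which $d\sigma(\theta) = \sin^{n-2}\phi\, d\phi\, d\sigma(\omega)$, the substitution $s = \cos^2\phi$ turns the polar integral into a Beta integral:
\[
c_{n,p} = (n-1)\omega_{n-1}\int_0^\pi |\cos\phi|^p \sin^{n-2}\phi\, d\phi = \frac{2\pi^{(n-1)/2}}{\Gamma\!\left(\frac{n-1}{2}\right)}\cdot \frac{\Gamma\!\left(\frac{p+1}{2}\right)\Gamma\!\left(\frac{n-1}{2}\right)}{\Gamma\!\left(\frac{n+p}{2}\right)} = \frac{2\pi^{(n-1)/2}\,\Gamma\!\left(\frac{p+1}{2}\right)}{\Gamma\!\left(\frac{n+p}{2}\right)}.
\]
Expanding $\omega_{n+p-2}$ and $\omega_{p-1}$ through $\omega_k = \pi^{k/2}/\Gamma(k/2 + 1)$ shows that $2\omega_{n+p-2}/\omega_{p-1}$ equals exactly this quantity, which closes the argument. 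I expect this Gamma-function bookkeeping — reconciling the classical Beta-value of $c_{n,p}$ with the precise constant $\alpha_{n,p}$ — to be the only delicate point; everything else is an immediate consequence of the radial form of $\nabla u$ and the rotational invariance of the measure on $\s^{n-1}$.
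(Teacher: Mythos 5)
Your proof is correct, and every constant checks out: the reduction $\Psi_\xi(u) = c_{n,p}\,I$ with $c_{n,p} = \int_{\s^{n-1}} |\theta \cdot \xi|^p\, d\sigma(\theta)$ independent of $\xi$, the collapse of the sphere integral to ${\cal E}_p(u) = \alpha_{n,p}(c_{n,p} I)^{1/p}(n\omega_n)^{-1/n}$, the equivalence with $\alpha_{n,p}^p\, c_{n,p} = (n\omega_n)^{1+p/n}$, and the Beta-integral evaluation confirming $c_{n,p} = 2\omega_{n+p-2}/\omega_{p-1}$ are all accurate (I verified the Gamma bookkeeping, including the identities $\frac{n+p-2}{2}+1 = \frac{n+p}{2}$ and $\frac{p-1}{2}+1 = \frac{p+1}{2}$). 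Your route differs from the paper's in two ways. First, the paper does not compute ${\cal E}_p(u)$ directly; it re-proves the inequality ${\cal E}_p(u) \leq \Vert \nabla u \Vert_{L^p(\R^n)}$ by applying H\"older's inequality to $\int_{\s^{n-1}} \Psi_\xi(u)^{\frac{n}{n+p}}\Psi_\xi(u)^{-\frac{n}{n+p}}\, d\sigma(\xi)$ plus Fubini, and then observes that equality holds if and only if $\xi \mapsto \Psi_\xi(u)$ is constant --- which it then verifies for radial $u$ exactly as you do. Second, for the constant-matching the paper simply quotes from Lutwak--Yang--Zhang the identity $\alpha_{n,p} = (n\omega_n)^{\frac{n+p}{np}} c_{n,p}^{-1/p}$, which is precisely your numerical identity in disguise, whereas you derive it from scratch from the explicit formula for $\alpha_{n,p}$ via the slicing $d\sigma(\theta) = \sin^{n-2}\phi\, d\phi\, d\sigma(\omega)$ and the Beta function. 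The trade-off: your argument is more self-contained (no external normalization identity needed) but proves only the radial case, while the paper's H\"older argument yields as a byproduct the sharper statement that equality in ${\cal E}_p(u) \leq \Vert \nabla u\Vert_{L^p(\R^n)}$ holds exactly when $\Psi_\xi(u)$ is constant in $\xi$, a characterization that situates the radial case inside the general comparison \eqref{comp}. One cosmetic remark: your degenerate case $I = 0$ implicitly uses the convention ${\cal E}_p(0) = 0$ (the integrand $\Psi_\xi(0)^{-n/p}$ is infinite); the paper sidesteps this by declaring at the outset that it suffices to treat $u \neq 0$, and you could do the same.
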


\begin{proof}
It suffices to prove the result for $u \neq 0$. By (iii) of Proposition \ref{P1}, we recall that $0 < c \leq \Psi_\xi(u) \leq C$ for all $\xi \in \s^{n-1}$, where $c$ and $C$ are positive constants.

We remark that the proof of the inequality ${\cal E}_p(u) \leq \Vert \nabla u \Vert_{L^p(\R^n)}$ involves two independent steps. The first one consists in applying H\"{o}lder's inequality as follows:

\begin{eqnarray*}
n \omega_n = \int_{\s^{n-1}} 1\, d\sigma(\xi) &=& \int_{\s^{n-1}} \Psi_\xi(u)^{\frac{n}{n+p}} \Psi_\xi(u)^{-\frac{n}{n+p}}\, d\sigma(\xi) \\
&\leq& \left( \int_{\s^{n-1}} \Psi_\xi(u)\, d\sigma(\xi) \right)^{\frac{n}{n+p}} \left( \int_{\s^{n-1}} \Psi_\xi(u)^{-\frac{n}{p}}\, d\sigma(\xi) \right)^{\frac{p}{n+p}},
\end{eqnarray*}
which yields
\begin{equation} \label{hol}
(n \omega_n)^{\frac{n+p}{n}} \left( \int_{\s^{n-1}} \Psi_\xi(u)^{-\frac{n}{p}}\, d\sigma(\xi) \right)^{-\frac{p}{n}} \leq \int_{\s^{n-1}} \Psi_\xi(u)\, d\sigma(\xi).
\end{equation}
The second step makes use of the Fubini's theorem on the above right-hand side, so we get
\begin{eqnarray}
\int_{\s^{n-1}} \Psi_\xi(u) \, d\sigma(\xi) &=& \int_{\s^{n-1}} \int_{\R^n} | \nabla u(x) \cdot \xi |^p\, dx\, d\sigma(\xi) \nonumber \\
&=& \int_{\R^n} \int_{\s^{n-1}} | \nabla u(x) \cdot \xi |^p\, d\sigma(\xi)\, dx \nonumber \\
&=& \left( \int_{\s^{n-1}} | \xi_0 \cdot \xi |^p\, d\sigma(\xi) \right) \left( \int_{\R^n} | \nabla u(x) |^p\, dx \right), \label{fub}
\end{eqnarray}
where $\xi_0$ is any fixed point $\xi_0$ in $\s^{n-1}$.

On the other hand, we know from \cite{LYZ2} that
\[
\alpha_{n,p} = (n \omega_n)^{\frac{n+p}{np}} \left( \int_{\s^{n-1}} | \xi_0 \cdot \xi |^p\, d\sigma(\xi) \right)^{- \frac 1p}.
\]
Hence, joining \eqref{hol} and \eqref{fub}, we obtain ${\cal E}_p(u) \leq \Vert \nabla u \Vert_{L^p(\R^n)}$. Moreover, equality holds if, and only if, the step of the H\"{o}lder's inequality becomes equality. But the latter is equivalent to the function $\xi \in \s^{n-1} \mapsto \Psi_\xi(u)$ to be constant.

Finally, for any radial function $u \in {\cal D}^{1,p}(\R^n)$, it easily follows that

\[
\Psi_\xi(u) = \int_{\R^n} \vert \nabla u(x) \cdot \xi \vert^p\, dx = \int_{\R^n} r^{-p} \vert u^\prime(r) \vert^p \vert x \cdot \xi \vert^p\, dx = \int_{\R^n} r^{-p} \vert u^\prime(r) \vert^p \vert x \cdot \xi_0 \vert^p\, dx
\]
for all $\xi \in \s^{n-1}$. In other words, $\Psi_\xi(u)$ doest'n depend on $\xi$ and this concludes the proof.
\end{proof}

\begin{proof}[Proof of Theorems \ref{T2} and \ref{T3}]
Arguing as in the proof of Theorem \ref{T1} with the aid of Propositions \ref{P2} and \ref{P3}, it suffices to establish the existence of a minimizer of $\Phi_{\cal A}$ in $X$.

We first assert that the condition $\lambda < \lambda^{\cal A}_{1,p}$ implies that $c_{\cal A} > 0$ in the critical case too. Indeed, since $q = p^*$, the sharp affine $L^p$ Poincaré and Sobolev inequalities lead to

\[
\Phi_{\cal A}(u) = {\cal E}^p_{p, \Omega}(u) - \lambda \int_{\Omega} |u|^p\, dx \geq \left\{ \begin{array}{llll}
{\cal E}^p_{p, \Omega}(u) \geq K_{n,p}^{-p}, &{\rm if}\ \lambda \leq 0, \\
( 1 - \frac{\lambda}{\lambda^{\cal A}_{1,p}} ) {\cal E}^p_{p, \Omega}(u) \geq ( 1 - \frac{\lambda}{\lambda^{\cal A}_{1,p}} ) K_{n,p}^{-p}, &{\rm if}\ \lambda > 0
\end{array}\right.
\]
for every $u \in X$. Therefore, $c_{\cal A} > 0$ in any situation.

We now show that $c_{\cal A} < K_{n,p}^{-p}$ provided that $\lambda > 0$. By Proposition \ref{P7}, the best constant $K_{n,p}$ for the affine inequality \eqref{ASob} coincides with the corresponding one for the classical $L^p$ Sobolev inequality.

For $n \geq p^2$, we evoke the well-known construction by Azorero and Peral \cite{AP1} of a function $w_0 \in X$ such that
\[
c_{\cal A} \leq \Phi_{\cal A}(w_0) = {\cal E}^p_{p, \Omega}(w_0) - \lambda \int_{\Omega} |w_0|^p\, dx \leq \int_\Omega \vert \nabla w_0 \vert^p\, dx - \lambda \int_{\Omega} |w_0|^p\, dx < K_{n,p}^{-p}.
\]
Here it was used \eqref{comp}.

For $n < p^2$, we take a principal eigenfunction $\varphi^{\cal A}_{1,p}$ of $\Delta^{\cal A}_p$ on $W^{1,p}_0(\Omega)$ with $\Vert \varphi^{\cal A}_{1,p} \Vert_{L^{p^*}(\Omega)} = 1$, whose existence is ensured by Theorem 4 of \cite{HJM5}, so $\varphi^{\cal A}_{1,p} \in X$. Set $\lambda_* = \lambda^{\cal A}_{1,p} - K_{n,p}^{-p} \Vert \varphi^{\cal A}_{1,p} \Vert^{-p}_{L^p(\Omega)}$. Note that $\lambda_* > 0$, because
\[
\lambda^{\cal A}_{1,p} \Vert \varphi^{\cal A}_{1,p} \Vert^p_{L^p(\Omega)} = {\cal E}^p_{p, \Omega}(\varphi^{\cal A}_{1,p}) > K_{n,p}^{-p},
\]
where the strict inequality follows from the characterization of extremals as quoted in the introduction. For $\lambda > \lambda_*$ and $w_0 = \varphi^{\cal A}_{1,p}$, we obtain

\begin{eqnarray*}
c_{\cal A} \leq \Phi_{\cal A}(w_0) &=& {\cal E}^p_{p, \Omega}(w_0) - \lambda \int_{\Omega} |w_0|^p\, dx \\
&\leq& \lambda^{\cal A}_{1,p} \Vert \varphi^{\cal A}_{1,p} \Vert^p_{L^p(\Omega)} - \lambda \int_{\Omega} |w_0|^p\, dx\\
&<& \lambda^{\cal A}_{1,p} \Vert \varphi^{\cal A}_{1,p} \Vert^p_{L^p(\Omega)} - \lambda_* \Vert \varphi^{\cal A}_{1,p} \Vert^p_{L^p(\Omega)} = K_{n,p}^{-p}.
\end{eqnarray*}

Finally, under the assumptions of Theorems \ref{T2} and \ref{T3}, we deduce that $0 < c_{\cal A} < K_{n,p}^{-p}$ and hence, by Proposition \ref{P6}, we complete the proof.
\end{proof}

\section{Proof of nonexistence theorems}

We prove Theorems \ref{T4} and \ref{T5} by using Propositions \ref{P2}, \ref{P3} and \ref{P4}.

\begin{proof}[Proof of Theorem \ref{T4}]
Let $p < q \leq p^*$ and $\lambda \geq \lambda^{\cal A}_{1,p}$. Assume that the problem \eqref{1.1} admits a nontrivial least energy weak solution $u_0 \in W^{1,p}_0(\Omega)$. Then,

\[
c_{\cal A} = \frac{{\cal E}^p_{p, \Omega}(u_0) - \lambda \int_{\Omega} |u_0|^p\, dx}{\left( \int_{\Omega} |u_0|^q\, dx \right)^{\frac pq}} = \frac{\int_{\Omega} |u_0|^q\, dx}{\left( \int_{\Omega} |u_0|^q\, dx \right)^{\frac pq}} > 0.
\]
On the other hand, for a principal eigenfunction $\varphi^{\cal A}_{1,p}$ of $\Delta^{\cal A}_p$ on $W^{1,p}_0(\Omega)$ with $\Vert \varphi^{\cal A}_{1,p} \Vert_{L^{q}(\Omega)} = 1$, we get

\[
c_{\cal A} \leq {\cal E}^p_{p, \Omega}(\varphi^{\cal A}_{1,p}) - \lambda \int_{\Omega} (\varphi^{\cal A}_{1,p})^p\, dx = (\lambda^{\cal A}_{1,p} - \lambda) \int_{\Omega} (\varphi^{\cal A}_{1,p})^p\, dx \leq 0,
\]
and thus we derive a contradiction.
\end{proof}

\begin{proof}[Proof of Theorem \ref{T5}]
Let $u_0 \in W^{1,p}_0(\Omega)$ be a nontrivial nonnegative weak solution of \eqref{1.1}. By Propositions \ref{P2} and \ref{P3}, $u_0$ is a positive $C^1$ function that satisfies $\nabla u_0 \neq 0$ on $\partial \Omega$. Assume without loss of generality that $\Omega$ is star-shaped with respect to the origin. Thus, we have $x \cdot \nu > 0$ on $\partial \Omega$. Since $q = p^*$, applying Proposition \ref{P4}, we get the contradiction
\begin{eqnarray*}
0 > \left( \frac{1}{p} - 1 \right) \int_{\partial \Omega} H_{u_0}^p(\nabla u_0) (x \cdot \nu)\, d\sigma &=& \left( \frac{n}{p} - 1 \right) \int_\Omega u_0^q + \lambda u_0^p\, dx - n \int_\Omega \frac{1}{q} u_0^q + \frac{\lambda}{p} u_0^p\, dx\\
&=& \left( \frac{n - p}{p} - \frac{n}{q} \right) \int_\Omega u_0^q\, dx - \lambda \int_\Omega u_0^p\, dx\\
&=& - \lambda \int_\Omega u_0^p\, dx \geq 0
\end{eqnarray*}
for every $\lambda \leq 0$.
\end{proof}

\n {\bf Acknowledgments:} The first author was partially supported by CNPq/Brazil (PQ 316526/2021-5) and  Fapemig (Universal-APQ-00709-18) and the second author was partially supported by CNPq (PQ 302670/2019-0 and Universal 429870/2018-3) and Fapemig (PPM 00561-18).

\end{document}